\newtheorem{theorem}{Theorem}[section]
\newtheorem{corollary}[theorem]{Corollary}
\newtheorem{proposition}[theorem]{Proposition}
\newtheorem{lemma}[theorem]{Lemma}
\newtheorem{question*}{Question}
\newtheorem{problem*}{Problem}
\newtheorem*{hypothesis*}{Hypothesis}
\newtheorem*{definition*}{Definition}
\theoremstyle{definition}
\theoremstyle{remark}
\newtheorem*{remark}{Remark}
\numberwithin{equation}{section}
\crefname{figure}{Figure}{Figures}
\theoremstyle{plain}
\newtheorem*{theorem*}{Theorem}
\newtheorem*{lemma*}{Lemma}
\crefname{theorems}{Theorem}{Theorems}
\crefname{corollaries}{Corollary}{Corollaries}
\newtheorem*{corollary*}{Corollary}
\crefname{corollaries*}{Corollary}{Corollaries}
\crefname{lemma}{Lemma}{Lemmata}
\crefname{proposition}{Proposition}{Propositions}
\crefname{conjectures}{Conjecture}{Conjectures}
\newtheorem*{conjonjecture*}{Conjecture}
\crefname{conjonjectures*}{Conjecture}{Conjectures}
\crefname{definitions}{Definition}{Definitions}
\crefname{hypotheses}{Hypothesis}{Hypotheses}
\renewcommand{\hat}{\widehat}
\renewcommand{\tilde}{\widetilde}
\renewcommand{\bar}{\overline}
\renewcommand{\epsilon}{\varepsilon}
\renewcommand{\pmod}[1]{\,(\mathrm{mod}\,\,#1)}
\renewcommand{\Re}{\mathrm{Re}}
\renewcommand{\Im}{\mathrm{Im}}
\newcommand{\re}{\Re}
\newcommand{\im}{\Im}
\newcommand{\GL}{\mathrm{GL}}
\newcommand{\N}{\mathrm{N}}
\newcommand{\Sym}{\mathrm{Sym}}
\newcommand{\A}{\mathbb{A}}
\newcommand{\CC}{\mathbb{C}}
\newcommand{\R}{\mathbb{R}}
\newcommand{\Q}{\mathbb{Q}}
\newcommand{\ka}{\mathfrak{a}}
\newcommand{\kp}{\mathfrak{p}}
\newcommand{\kq}{\mathfrak{q}}
\newcommand{\cL}{\mathcal{L}}
\newcommand{\cO}{\mathcal{O}}
\title[A new zero-free region for Rankin--Selberg $L$-functions]{A new zero-free region for Rankin--Selberg $L$-functions}
\author{Gergely Harcos}
\address{Alfr{\'e}d R{\'e}nyi Institute of Mathematics, POB 127, Budapest H-1364, Hungary}
\email{\href{mailto:gharcos@renyi.hu}{gharcos@renyi.hu}}
\author{Jesse Thorner}
\address{Department of Mathematics, University of Illinois, Urbana, IL 61801, USA}
\email{\href{mailto:jesse.thorner@gmail.com}{jesse.thorner@gmail.com}}
\begin{document}

\begin{abstract}
Let $\pi$ and $\pi'$ be cuspidal automorphic representations of $\mathrm{GL}(n)$ and $\mathrm{GL}(n')$ with unitary central characters. We establish a new zero-free region for all $\mathrm{GL}(1)$-twists of the Rankin--Selberg $L$-function $L(s,\pi\times\pi')$, generalizing Siegel's celebrated work on Dirichlet $L$-functions. As an application, we prove the first unconditional Siegel--Walfisz theorem for the Dirichlet coefficients of $-L'(s,\pi\times\pi')/L(s,\pi\times\pi')$. Also, for $n\leq 8$, we extend the region of holomorphy and nonvanishing for the twisted symmetric power $L$-functions $L(s,\pi,\Sym^n\otimes\chi)$ of any cuspidal automorphic representation of $\mathrm{GL}(2)$.
\end{abstract}

\thanks{The first author was supported by the MTA–HUN-REN RI Lend\"ulet Automorphic Research Group and NKFIH (National Research, Development and Innovation Office) grant K~143876.  The second author was partially supported by the National Science Foundation (DMS-2401311) and the Simons Foundation (MP-TSM-00002484).}

\subjclass[2000]{Primary 11M41; Secondary 11F66, 11F70}

\maketitle

\section{Introduction and the main result}
\label{sec:intro}

In 1896, Hadamard and de la Vall{\'e}e Poussin independently proved that the Riemann zeta function $\zeta(s)$ does not vanish in the half-plane $\Re(s)\geq 1$. This statement is equivalent to the asymptotic form of the prime number theorem. In 1899, de la Vall{\'e}e Poussin established the classical zero-free region for $\zeta(s)$, which was quickly extended to Dirichlet $L$-functions. In particular, let $\chi\pmod{q}$ be a Dirichlet character. There exists an absolute and effectively computable constant $\Cl[abcon]{ZetaZFR}>0$ such that $L(s,\chi)$ has at most one zero $\beta$ (necessarily real and simple) in the region
\begin{equation}
\label{eqn:standard_Dirichlet}
\Re(s)\geq 1-\Cr{ZetaZFR}/\log(q(|\im(s)|+3)).
\end{equation}
If $\beta$ exists, then $\chi$ is quadratic. In this case, Siegel's lower bound on $L(1,\chi)$ implies that for any $\epsilon>0$, there exists an ineffective constant $\Cl[abcon]{SiegelDirichlet}=\Cr{SiegelDirichlet}(\epsilon)>0$ such that
\begin{equation}
\label{eqn:Siegel_Dirichlet}
L(\sigma,\chi)\neq 0,\qquad \sigma\geq 1-\Cr{SiegelDirichlet}q^{-\epsilon}.	
\end{equation}
See \cite{Siegel,Walfisz} for the original references.

The method of de la Vall{\'e}e Poussin can be modified to establish a zero-free region for many $L$-functions. Specifically, let $F$ be a number field, let $\mathbb{A}_F$ be the ring of adeles over $F$, and let $\mathfrak{F}_{n}$ be the set of cuspidal automorphic representations $\pi$ of $\GL_{n}(\mathbb{A}_F)$ whose central character $\omega_{\pi}$ is unitary. Given $\pi\in\mathfrak{F}_{n}$, let $\tilde{\pi}\in\mathfrak{F}_{n}$ be the contragredient and $L(s,\pi)$ be the $L$-function of $\pi$. Let $C(\pi)\geq 3$ denote the analytic conductor, which measures the arithmetic and spectral complexity of $\pi$. Given $(\pi,\chi)\in\mathfrak{F}_{n}\times\mathfrak{F}_{1}$, let $\pi\otimes\chi$ be the cuspidal automorphic representation $g\mapsto\pi(g)\chi(\det g)$. For convenience, we introduce the subset $\mathfrak{F}_n^*$ consisting of $\pi\in \mathfrak{F}_n$ for which $\omega_\pi$ is trivial on the diagonally embedded positive reals. For each $\pi\in\mathfrak{F}_n$, there exist unique $\pi^*\in\mathfrak{F}_n^*$ and $t_{\pi}\in\R$ such that
\[
\pi=\pi^*\otimes|\cdot|^{it_{\pi}},\qquad L(s,\pi)=L(s+it_{\pi},\pi^*).
\]

Given $(\pi,\pi')\in\mathfrak{F}_n\times\mathfrak{F}_{n'}$, let $L(s,\pi\times\pi')$ be the associated Rankin--Selberg $L$-function, whose basic properties were established by Jacquet, Piatetski-Shapiro, and Shalika~\cite{JPSS,JS1,JS2}. If $(\pi,\pi')\in\mathfrak{F}_n^*\times\mathfrak{F}_{n'}^*$, then $L(s,\pi\times\pi')$ is holomorphic away from a possible pole at $s=1$, which occurs if and only if $\pi'=\tilde{\pi}$. If $\pi'\in\mathfrak{F}_1^*$ is trivial, then $L(s,\pi\times\pi')=L(s,\pi)$.

In 1976, Jacquet and Shalika~\cite[Theorem~1.3]{JacquetShalika} proved that $L(s,\pi)$ does not vanish in the half-plane $\Re(s)\geq 1$. In 1981, Shahidi~\cite[Theorem~5.2]{Shahidi} established the same zero-free region for $L(s,\pi\times\pi')$. Replacing $\pi$ with $\pi\otimes|\cdot|^{it}$ and varying $t\in\R$, we find that these results are equivalent to proving that for all $(\pi,\pi')\in\mathfrak{F}_n\times\mathfrak{F}_{n'}$, we have that $L(\sigma,\pi)\neq 0$ and $L(\sigma,\pi\times\pi')\neq 0$ when $\sigma\geq 1$.

In some cases, the method of de la Vall{\'e}e Poussin can extend this zero-free region.\footnote{In \cite{Brumley,GoldfeldLi,Humphries,HumphriesThorner2,HumphriesThorner,JLTW,Li,Molteni,Moreno,SoundararajanThorner}, it is assumed that $(\pi,\pi')\in\mathfrak{F}_n^*\times\mathfrak{F}_{n'}^*$, so $t_{\pi}=t_{\pi'}=0$.} Brumley~\cite[Theorem~A.1]{Humphries} and Humphries--Thorner~\cite[Theorem~2.1]{HumphriesThorner} (see also Moreno~\cite{Moreno}) proved that there exists an effectively computable constant $\Cl[abcon]{ZFR2_std}=\Cr{ZFR2_std}(n,n',[F:\Q])>0$ with the following property. If $(\pi,\pi')\in\mathfrak{F}_n\times\mathfrak{F}_{n'}$,
\begin{equation}
\label{eqn:special_star}
\pi=\tilde\pi\qquad\textup{or}
\qquad\pi'=\tilde\pi'\qquad\textup{or}
\qquad\pi'^*=\tilde\pi^*,
\end{equation}
and $t_{\pi}+t_{\pi'}\neq 0$, then $L(\sigma,\pi\times\pi')\neq 0$ in the interval
\begin{equation}
\label{eqn:standard_ZFR}
\sigma\geq 1-\Cr{ZFR2_std}/\log(C(\pi)C(\pi')).
\end{equation}
If \eqref{eqn:special_star} holds and $t_{\pi}+t_{\pi'}=0$, then $L(\sigma,\pi\times\pi')$ has at most one zero $\beta_{\pi\times\pi'}$ in the interval \eqref{eqn:standard_ZFR}.  If $\beta_{\pi\times\pi'}$ exists, then it is simple, and
\begin{equation}
\label{eqn:SZself-dual}
(\pi,\pi')=(\tilde{\pi},\tilde{\pi}')\qquad \textup{or}\qquad\pi'=\tilde\pi.
\end{equation}
When $\pi'\in\mathfrak{F}_1$ is trivial, we recover the standard zero-free region for $L(s,\pi)$.

There are limited (though important) cases where \eqref{eqn:SZself-dual} holds and exceptional zeros have been precluded altogether. For example, if $\pi'\in\mathfrak{F}_1$ is trivial and $\pi\in\mathfrak{F}_2\cup\mathfrak{F}_3$, then the exceptional zero does not exist \cite{Banks,HoffsteinLockhart,HoffsteinRamakrishnan}. See \cite{Luo,RamakrishnanWang} for examples when $\pi'\in\mathfrak{F}_2$ and $\pi\in\mathfrak{F}_2\cup\mathfrak{F}_3$. Generalizing \eqref{eqn:Siegel_Dirichlet}, Jiang--L{\"u}--Thorner--Wang~\cite[Section~4]{JLTW} and Humphries--Thorner~\cite[Theorem~2.4]{HumphriesThorner2} proved that if $\pi\in\mathfrak{F}_n$ and $\chi\in \mathfrak{F}_1$ is quadratic (in which case $t_{\chi}=0$), then for all $\epsilon>0$, there exists an ineffective constant $\Cl[abcon]{Siegel_pre}=\Cr{Siegel_pre}(\pi,\epsilon)>0$ such that
\[
L(\sigma,\pi\otimes\chi)\neq 0,\qquad L(\sigma,\pi\otimes(\tilde{\pi}\otimes\chi))\neq 0,\qquad \sigma\geq 1-\Cr{Siegel_pre}C(\chi)^{-\epsilon}.
\]
See also Molteni~\cite{Molteni}.

If $(\pi,\pi')\in\mathfrak{F}_n\times\mathfrak{F}_{n'}$ does not satisfy \eqref{eqn:special_star}, then in general, it is unclear how to execute the method of de la Vall{\'e}e Poussin. For such pairs, Brumley (\cite{Brumley}, \cite[Theorem~A.1]{Lapid}) made the first uniform improvement over \cite[Theorem~5.2]{Shahidi}, proving that for all $\epsilon>0$, there exists an effectively computable constant $\Cl[abcon]{ZFR_Brumley}=\Cr{ZFR_Brumley}(n,n',F,\epsilon)>0$ such that $L(\sigma,\pi\times\pi')\neq 0$ when
\begin{equation}
\label{eqn:Brumley_ZFR}
\sigma\geq 1-\Cr{ZFR_Brumley}/(C(\pi)C(\pi'))^{n+n'-1+\epsilon}.
\end{equation}
See Zhang~\cite{Zhang} for a recent numerical improvement in the exponent of  \eqref{eqn:Brumley_ZFR}.

Many others have established zero-free regions for $L$-functions without the method of de la Vall{\'e}e Poussin. For example, Sarnak~\cite{Sarnak} and Gelbart--Lapid~\cite{GelbartLapid} established zero-free regions using Eisenstein series and the Maa{\ss}--Selberg relations. See also the work of Goldfeld--Li~\cite[Theorem~1.2]{GoldfeldLi} and Humphries~\cite[Theorem~1.9]{Humphries} on $L(s,\pi\times\tilde{\pi})$.

In this paper, we develop a new method for establishing zero-free regions for $L$-functions. We apply our method to extend Siegel's celebrated result \eqref{eqn:Siegel_Dirichlet} to every $\GL_1$-twist of $L(s,\pi\times\pi')$, where $(\pi,\pi')\in\mathfrak{F}_n\times\mathfrak{F}_{n'}$. Our proof relies crucially on the group structure of $\mathfrak{F}_1$. We substantially improve \eqref{eqn:Brumley_ZFR} in the $\mathrm{GL}_1$-twist aspect, but the dependence on $\pi$ and $\pi'$ is no longer effective.

\begin{theorem}
\label{thm:Siegel}
Let $\pi\in\mathfrak{F}_n$ and $\pi'\in\mathfrak{F}_{n'}$. For all $\epsilon>0$, there exists an ineffective constant $\Cl[abcon]{ZFR2}=\Cr{ZFR2}(\pi,\pi',\epsilon)>0$ such that if $\chi\in\mathfrak{F}_1$, then
\begin{equation}
\label{eqn:finalbound2}
|L(\sigma,\pi\times(\pi'\otimes\chi))|\geq\Cr{ZFR2}C(\chi)^{-\epsilon},\qquad \sigma\geq 1-\Cr{ZFR2}C(\chi)^{-\epsilon}.
\end{equation}
\end{theorem}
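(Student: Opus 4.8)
The plan is to run a Siegel‑type positivity argument, using the group structure of $\mathfrak{F}_1$ in two separate ways. Fix $\pi\in\mathfrak{F}_n$, $\pi'\in\mathfrak{F}_{n'}$ and $\epsilon>0$. First, the $|\det|^{it}$‑direction of $\mathfrak{F}_1$ lets us discard self‑duality: a zero $\rho$ of $L(s,\pi\times(\pi'\otimes\chi))$ with $|\Im\rho|\ll 1$ is, via $L(s,\pi\times(\pi'\otimes\chi\otimes|\det|^{i\Im\rho}))=L(s+i\Im\rho,\pi\times(\pi'\otimes\chi))$, a \emph{real} zero at $\Re\rho$ of $L(s,\pi\times(\pi'\otimes\chi'))$ with $\chi'=\chi\otimes|\det|^{i\Im\rho}\in\mathfrak{F}_1$ and $C(\chi')\asymp C(\chi)$; so it suffices to prove \eqref{eqn:finalbound2} on the real line, and every ``exceptional'' zero entering the argument may be taken real. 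Second, for a fixed auxiliary $\chi_0\in\mathfrak{F}_1$ and the character $\chi$ of the statement, the isobaric representation $\Xi=\pi\boxplus(\tilde\pi'\otimes\bar\chi_0)\boxplus(\tilde\pi'\otimes\bar\chi)\in\mathfrak{F}_{n+2n'}$ has $L(s,\Xi\times\tilde\Xi)$ with nonnegative Dirichlet coefficients and first coefficient $1$; expanding the product and using $\bar\chi\chi=1$ to collapse the diagonal terms gives, for real $s$,
\[
F(s):=L(s,\Xi\times\tilde\Xi)=L(s,\pi\times\tilde\pi)\,L(s,\pi'\times\tilde\pi')^2\,\bigl|L(s,\pi\times(\pi'\otimes\chi_0))\bigr|^2\,\bigl|L(s,\pi\times(\pi'\otimes\chi))\bigr|^2\,\bigl|L(s,(\pi'\times\tilde\pi')\otimes\chi\bar\chi_0)\bigr|^2,
\]
the cross‑twist $\chi\bar\chi_0$ lying in $\mathfrak{F}_1$ precisely because $\mathfrak{F}_1$ is a group. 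Outside a finite set of $\chi$ (those for which $\chi\bar\chi_0$ is a self‑twist of $\pi'$, or $\pi'\otimes\chi$ a twist of $\tilde\pi$ — where \eqref{eqn:finalbound2} follows from Shahidi's nonvanishing and a compactness argument), $F$ is holomorphic away from a pole at $s=1$ whose order and positive leading Laurent coefficient $\lambda$ factor explicitly through the residues of $L(\cdot,\pi\times\tilde\pi)$, $L(\cdot,\pi'\times\tilde\pi')$ and through the values $|L(1,\pi\times(\pi'\otimes\chi_0))|^2$, $|L(1,\pi\times(\pi'\otimes\chi))|^2$, $|L(1,(\pi'\times\tilde\pi')\otimes\chi\bar\chi_0)|^2$.

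Now the dichotomy that produces ineffectivity. For a small $\epsilon_1=\epsilon_1(\pi,\pi',\epsilon)>0$: either (i) $|L(\sigma,\pi\times(\pi'\otimes\psi))|\ge\epsilon_1$ for all $\psi\in\mathfrak{F}_1$ and all real $\sigma\in[1-\epsilon_1,1]$, in which case \eqref{eqn:finalbound2} holds effectively with $\Cr{ZFR2}=\epsilon_1$; or (ii) there exist $\psi\in\mathfrak{F}_1$ and real $\sigma\ge 1-\epsilon_1$ with $|L(\sigma,\pi\times(\pi'\otimes\psi))|<\epsilon_1$, from which — perturbing by a suitable $|\det|^{it}$ and invoking the argument principle together with the convexity bound — we extract a \emph{fixed} $\chi_0\in\mathfrak{F}_1$ and a \emph{fixed} real zero $\beta_0\in[1-c\epsilon_1,1)$ of $L(s,\pi\times(\pi'\otimes\chi_0))$. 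Since we cannot locate $\chi_0$ or $\beta_0$, from here the constant is ineffective. Take $F$ built with this $\chi_0$: its factor $|L(s,\pi\times(\pi'\otimes\chi_0))|^2$ vanishes at $\beta_0$, so $F(\beta_0)=0$. Combining the trivial bound $\tfrac12\le\sum_m a_m m^{-\beta_0}e^{-m/X}$ (valid for $X\ge 2$ by nonnegativity of the coefficients) with the contour shift of
\[
\sum_m a_m m^{-\beta_0}e^{-m/X}=\frac{1}{2\pi i}\int_{(2)}F(\beta_0+w)\Gamma(w)X^w\,dw
\]
past the pole of $F$ at $w=1-\beta_0$ and the pole of $\Gamma$ at $w=0$ (where the residue is $F(\beta_0)=0$), and bounding the shifted integral on $\Re w=\tfrac12-\beta_0$ by the convexity bound for $L(s,\Xi\times\tilde\Xi)$, yields — after controlling the subleading Laurent coefficients of $F$ at $s=1$, which is where pinning the vanishing at the \emph{auxiliary} zero $\beta_0$ rather than at an uncontrolled zero of the target matters — an estimate $\lambda\gg (1-\beta_0)\,\mathfrak{q}^{-K(1-\beta_0)}(\log\mathfrak{q})^{-O(1)}$, with $K=K(n,n',[F:\Q])$ and $\mathfrak{q}\ll_{\pi,\pi',\chi_0}C(\chi)^{O(1)}$ the conductor of $F$. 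Dividing out the fixed quantity $r_\pi r_{\pi'}^2|L(1,\pi\times(\pi'\otimes\chi_0))|^2$ and the polynomially‑in‑$\log C(\chi)$ bounded value $|L(1,(\pi'\times\tilde\pi')\otimes\chi\bar\chi_0)|^2$, and choosing $\epsilon_1$ so small that $1-\beta_0$ falls below the threshold forcing $\mathfrak{q}^{-K(1-\beta_0)}\ge C(\chi)^{-\epsilon}$, gives $|L(1,\pi\times(\pi'\otimes\chi))|\gg_{\pi,\pi',\chi_0,\beta_0,\epsilon}C(\chi)^{-\epsilon}$. To pass from $s=1$ to the full segment, one reruns the positivity argument at real points $\sigma$ just left of $1$ — where the sign of $F(\sigma)$ is dictated by the pole — to rule out zeros of $L(s,\pi\times(\pi'\otimes\chi))$ with $\Re s\ge 1-cC(\chi)^{-\epsilon}$, $|\Im s|\le 1$, and then transfers the value bound down the segment via the partial‑fraction expansion of $L'/L$, at the cost of shrinking $\epsilon$.

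The main obstacle is the absence of any classical exceptional‑zero theory for $L(s,\pi\times(\pi'\otimes\chi))$ when this $L$‑function is not self‑dual (cf.\ \cite{HumphriesThorner}): there is no ``at most one zero near $s=1$'' to lean on, so a hypothetical near‑$1$ zero of the target would inflate the principal part of $F$ at $s=1$ and wreck a naive one‑character version of the argument. The remedy is exactly the two uses of the group structure above — anchoring the vanishing of $F$ at the explicitly exhibited zero $\beta_0$ of the \emph{fixed} auxiliary twist $\chi_0$, so that the residue extraction is driven by $\lambda$ up to controllable errors, and using the $|\det|^{it}$‑translation so that the zeros one reasons about genuinely lie on the real line. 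Reconciling these two mechanisms, and checking that the lower bound for $\lambda$ survives division by the $\chi$‑dependent cross‑term factor, is the technical heart of the proof.
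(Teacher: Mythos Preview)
Your overall architecture—fix an auxiliary twist $\chi_0$ carrying a real zero $\beta_0$ close to $1$, build a Rankin--Selberg square with nonnegative coefficients, and run the smoothed positivity argument—matches the paper's, but the specific isobaric sum you chose does not close. With $\Xi=\pi\boxplus(\tilde\pi'\otimes\bar\chi_0)\boxplus(\tilde\pi'\otimes\bar\chi)$, the function $F(s)=L(s,\Xi\times\tilde\Xi)$ has a pole of order $3$ at $s=1$, while the target pair $L(s,\pi\times(\pi'\otimes\chi))\,L(s,\tilde\pi\times(\tilde\pi'\otimes\bar\chi))$ appears only to the first power. Expanding the residue of $F(s)X^{s-\beta_0}\Gamma(s-\beta_0)$ at $s=1$ means distributing two derivatives among these two factors and the rest; the monomial with one derivative on each of them contributes a term proportional to
\[
|L(1,(\pi'\times\tilde\pi')\otimes\chi\bar\chi_0)|^{2}\,|L'(1,\pi\times(\pi'\otimes\chi))|^{2}\,X^{1-\beta_0}\Gamma(1-\beta_0),
\]
which carries \emph{no} factor of $|L(1,\pi\times(\pi'\otimes\chi))|$. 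Convexity bounds this only by $C(\chi)^{\epsilon}X^{1-\beta_0}/(1-\beta_0)$, which dominates the constant $1/2$ from positivity, so you cannot isolate the target value, and the claimed lower bound $\lambda\gg(1-\beta_0)\mathfrak{q}^{-K(1-\beta_0)}$ does not follow from the inequality. (Separately, the cross term $|L(1,(\pi'\times\tilde\pi')\otimes\chi\bar\chi_0)|$ is not known to be bounded polynomially in $\log C(\chi)$—only $C(\chi)^{\epsilon}$ is available—but this would be harmless if the main issue were resolved.)

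The paper repairs exactly this mismatch by taking the \emph{four}-term sum $\Pi=\pi\boxplus\pi\otimes\chi\boxplus\tilde\pi'\boxplus\tilde\pi'\otimes\bar\chi$, so that $D(s)=L(s,\Pi\times\tilde\Pi)$ has a pole of order $4$ at $s=1$ and the target pair appears \emph{squared}. A pigeonhole count (their Lemma~3.2: $2k$ factors, only $2k-1$ derivatives to distribute) then forces every monomial in the residue to retain at least one undifferentiated factor of modulus $|L(1,\pi\times(\pi'\otimes\chi))|$, and the positivity inequality yields the lower bound directly. The cost is that $D(s)$ acquires the extra factor $L(s,\pi\times(\pi'\otimes\chi^2))$, whose possible pole the paper must handle by bootstrapping through the subgroup chain $\{\chi:\chi^*=1\}\leq\{\chi:{\chi^*}^2=1\}\leq\mathfrak{F}_1$ rather than treating all $\chi$ at once. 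Your three-term sum was evidently meant to avoid that bootstrapping and the $\chi^2$ factor, but the resulting parity mismatch between pole order and target exponent is where the argument breaks.
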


\begin{remark}
By replacing $\chi$ with $\chi|\cdot|^{it}$, the bound \eqref{eqn:finalbound2} becomes
\begin{equation}
\label{eqn:ZFR}
|L(\sigma+it,\pi\times(\pi'\otimes\chi))|\geq\Cr{ZFR2}C(it,\chi)^{-\epsilon},\qquad \sigma\geq 1-\Cr{ZFR2}C(it,\chi)^{-\epsilon},
\end{equation}
where $C(it,\chi)\ll C(\chi)(|t|+1)^{[F:\Q]}$. In particular, there exists an ineffective constant $\Cl[abcon]{C777}=\Cr{C777}(\pi,\pi',\epsilon)>0$ such that
\[|L(\sigma+it,\pi\times\pi')|\geq\Cr{C777}(|t|+1)^{-\epsilon},\qquad \sigma\geq 1-\Cr{C777}(|t|+1)^{-\epsilon}.	\]
This $t$-aspect lower bound can be used to bound Eisenstein series $E(g,\varphi,s)$ on $\GL_{n+n'}$ coming from cusp forms $\varphi$ on the Levi factor $\GL_{n}\times\GL_{n'}$ of $\GL_{n+n'}$. See \cite[Corollary~2]{GelbartLapid} and its proof for details.
\end{remark}
\begin{remark}
One might hope to prove a version of \cref{thm:Siegel}, reminiscent of Tatuzawa's refinement of Siegel's theorem \cite{Tatuzawa}. The goal would be to prove that there exists $\psi\in\mathfrak{F}_1$ depending on $(\pi,\pi',\epsilon)$ such that, for every $\chi\in\mathfrak{F}_1$, the bound \eqref{eqn:finalbound2} holds with an \emph{effective} constant $c_6$ as long as $L(s,\pi\times(\pi'\otimes\chi))$ is not a shift of $L(s,\pi\times(\pi'\otimes\psi))$. Ichihara and Matsumoto~\cite{IM} studied such Tatuzawa-type estimates for general $L$-functions. However, their hypotheses are very restrictive, and they do not seem applicable in the setting of \cref{thm:Siegel}.
\end{remark}

After describing two applications in \cref{sec:Applications} and reviewing standard properties of $L$-functions in \cref{sec:Properties}, we provide an overview of the strategy of our proof in \cref{sec:Strategy}.  We execute our strategy in Sections~\ref{sec:proof_prop}--\ref{sec:finish} and deduce our applications in Sections~\ref{sec:PNTAP}--\ref{sec:sym}.

\subsection*{Acknowledgements}
We thank Erez Lapid, Paul Nelson, and the anonymous referee for their helpful remarks.

\section{Two applications}
\label{sec:Applications}

\subsection{Generalizing the Siegel--Walfisz theorem}
\label{subsec:application}
Let $\cO_F$ be the ring of integers of $F$, and define the norm $\N=\N_{F/\Q}$ on the nonzero ideals $\ka$ of $\cO_F$ by $\N\ka=|\cO_F/\ka|$. Let $\kq$ be a nonzero ideal of $\cO_F$, and let $I(\kq)$ be the group of fractional ideals that are coprime to $\kq$. Let $P(\kq)$ be the subgroup of $I(\kq)$ consisting of principal fractional ideals $(\alpha)$ such that $\alpha$ is totally positive and $\alpha\equiv 1\pmod{\kq}$. The narrow ray class group $\mathrm{Cl}(\kq)$ is the finite abelian quotient $I(\kq)/P(\kq)$. Let $\mathscr{P}(\kq)$ be the set of primitive ray class characters that induce the characters of $\mathrm{Cl}(\kq)$.

If there exists $u\in\R$ such that $(\pi,\pi')\in\mathfrak{F}_n\times\mathfrak{F}_{n'}$ satisfies $\pi'=\tilde{\pi}\otimes|\cdot|^{iu}$, then we define $\mathcal{M}_{\pi\times\pi'}(x)=x^{1-iu}/(1-iu)$. For all other pairs $(\pi,\pi')\in\mathfrak{F}_n\times\mathfrak{F}_{n'}$, we define $\mathcal{M}_{\pi\times\pi'}(x)=0$. We also define $\Lambda_{\pi\times\pi'}(\ka)$ by the Dirichlet series identity
\begin{equation}
\label{eqn:Lambda_def}
\sum_{\ka}\frac{\Lambda_{\pi\times\pi'}(\ka)}{\N\ka^s}=-\frac{L'}{L}(s,\pi\times\pi'),\qquad\Re(s)>1.
\end{equation}
For a class $\mathcal{C}\in \mathrm{Cl}(\kq)$ and $x\geq 2$, we define
\begin{equation}
\label{eqn:Edef}
\mathcal{E}_{\pi\times\pi'}(x;\kq,\mathcal{C})=\sum_{\substack{\N\ka\leq x \\ \ka\in\mathcal{C}}}\Lambda_{\pi\times\pi'}(\ka)-\frac{1}{|\mathrm{Cl}(\kq)|}\sum_{\chi\in\mathscr{P}(\kq)}\overline{\chi}(\mathcal{C})\mathcal{M}_{\pi\times(\pi'\otimes\chi)}(x).
\end{equation}
The bound $\mathcal{E}_{\pi\times\pi'}(x;\kq,\mathcal{C})=o(x)$ is equivalent to the fact that if $\chi\in\mathscr{P}(\kq)$ and $\Re(s)\geq 1$, then $L(s,\pi\times(\pi'\otimes\chi))\neq 0$.

Unconditionally, Brumley's narrow zero-free region \eqref{eqn:Brumley_ZFR} suffices to prove that there exists an effectively computable constant $\Cl[abcon]{Brumley_PNTAP1}=\Cr{Brumley_PNTAP1}(n,n',[F:\Q])>0$ such that if $\N\kq\leq(\log x)^{\Cr{Brumley_PNTAP1}}$, then $\mathcal{E}_{\pi\times\pi'}(x;\kq,\mathcal{C})\ll_{\pi,\pi'}x(\log x)^{-\Cr{Brumley_PNTAP1}}$. Before the present work, the bound $\mathcal{E}_{\pi\times\pi'}(x;\kq,\mathcal{C})\ll_{\pi,\pi',A}x(\log x)^{-A}$ for all $A>0$ was only known in limited cases, as a consequence of the zero-free regions proved in \cite{Humphries,HumphriesThorner2,HumphriesThorner,JLTW}. We prove the following result using \cref{thm:Siegel}.
\begin{theorem}
\label{thm:PNTAP}
Let $\pi\in\mathfrak{F}_n$, $\pi'\in\mathfrak{F}_{n'}$, $x\geq 2$, and $A>0$. For a nonzero ideal $\kq$ of $\cO_F$, let $\mathrm{Cl}(\kq)$ be the associated ray class group, and let $\mathcal{C}\in\mathrm{Cl}(\kq)$. If $\mathcal{E}_{\pi\times\pi'}(x;\kq,\mathcal{C})$ is given by \eqref{eqn:Edef} and $\N\kq\leq(\log x)^A$, then
\[
\mathcal{E}_{\pi\times\pi'}(x;\kq,\mathcal{C})\ll_{\pi,\pi',A}x(\log x)^{-A}.
\]
The implied constant is ineffective.
\end{theorem}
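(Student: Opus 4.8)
The plan is to deduce \cref{thm:PNTAP} from \cref{thm:Siegel} along the classical route: expand the ray-class sum into twisted $\psi$-functions by orthogonality, and estimate each twisted $\psi$-function by a truncated Perron formula whose contour may be pushed slightly past the line $\Re(s)=1$ using the zero-free region of \cref{thm:Siegel}. For the first step, since every $\ka\in\mathcal{C}$ is coprime to $\kq$, orthogonality of the characters in $\mathscr{P}(\kq)$ on $\mathrm{Cl}(\kq)$ together with the identity $\Lambda_{\pi\times\pi'}(\ka)\chi(\ka)=\Lambda_{\pi\times(\pi'\otimes\chi)}(\ka)$ for $\ka$ coprime to $\kq$ gives
\[
\sum_{\substack{\N\ka\le x\\ \ka\in\mathcal{C}}}\Lambda_{\pi\times\pi'}(\ka)=\frac{1}{|\mathrm{Cl}(\kq)|}\sum_{\chi\in\mathscr{P}(\kq)}\bar\chi(\mathcal{C})\,\psi_{\pi\times(\pi'\otimes\chi)}(x)+O_{\pi,\pi'}\bigl(x^{1-\delta_0}\bigr),
\]
where $\psi_{\pi\times\pi''}(x):=\sum_{\N\ka\le x}\Lambda_{\pi\times\pi''}(\ka)$ and the error, which accounts for the $O(\log\N\kq)$ prime powers dividing $\kq$, is bounded using $|\Lambda_{\pi\times\pi''}(\kp^k)|\ll_{n,n'}\N\kp^{k(1-2\delta_0)}$ for a fixed $\delta_0>0$ (a consequence of known bounds toward Ramanujan). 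Moreover $\mathcal{M}_{\pi\times(\pi'\otimes\chi)}(x)$ is exactly the residue at $s=1$ of $x^{s}s^{-1}\bigl(-L'(s,\pi\times(\pi'\otimes\chi))/L(s,\pi\times(\pi'\otimes\chi))\bigr)$, which vanishes unless $\pi'\otimes\chi=\tilde\pi\otimes|\det|^{iu}$ for some $u\in\R$ --- a situation realized by at most one $\chi\in\mathscr{P}(\kq)$, in which case $L(s,\pi\times(\pi'\otimes\chi))=L(s+iu,\pi\times\tilde\pi)$ has a simple pole at $s=1-iu$. Comparing with \eqref{eqn:Edef}, it therefore suffices to prove, for each individual $\chi\in\mathscr{P}(\kq)$, that
\[
\psi_{\pi\times(\pi'\otimes\chi)}(x)=\mathcal{M}_{\pi\times(\pi'\otimes\chi)}(x)+O_{\pi,\pi',A}\bigl(x(\log x)^{-A}\bigr);
\]
summation over $\mathscr{P}(\kq)$ against the weights $\bar\chi(\mathcal{C})/|\mathrm{Cl}(\kq)|$ then gives \cref{thm:PNTAP}.

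Now fix $\chi\in\mathscr{P}(\kq)$ and write $q:=C(\chi)\ll_F\N\kq\le(\log x)^A$. By the standard conductor bounds for twists, $L(s,\pi\times(\pi'\otimes\chi))$ has analytic conductor $\ll_{\pi,\pi'}q^{O_{n,n'}(1)}\ll_{\pi,\pi'}(\log x)^{O_{n,n'}(A)}$, and its analytic properties (meromorphic continuation, functional equation, Hadamard factorization, at most a simple pole at $s=1$) are those established by Jacquet, Piatetski-Shapiro, and Shalika. Choose $B=B(A,n,n',[F:\Q])$ large, set $T:=(\log x)^{B}$, and choose $\epsilon=\epsilon(A,n,n',[F:\Q])>0$ small enough that $\epsilon(A+B[F:\Q])\le\tfrac12$. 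Apply a truncated Perron formula to $\psi_{\pi\times(\pi'\otimes\chi)}(x)$ at abscissa $1+1/\log x$ and height $T$ --- the needed mean bound $\sum_{\N\ka\le y}|\Lambda_{\pi\times(\pi'\otimes\chi)}(\ka)|\ll_{n,n'}y\log(qy)$ is classical --- and shift the contour to $\Re(s)=1-\eta$ with $\eta:=\tfrac12\Cr{ZFR2}\bigl(q(T+1)^{[F:\Q]}\bigr)^{-\epsilon}$. By \cref{thm:Siegel} in the form \eqref{eqn:ZFR}, no zero of $L(s,\pi\times(\pi'\otimes\chi))$ lies in the rectangle $1-\eta\le\Re(s)\le1+1/\log x$, $|\Im(s)|\le T$; in fact each zero there has real part $<1-2\eta$, so the only singularity crossed is the pole at $s=1-iu$ (if present), whose residue produces the main term $\mathcal{M}_{\pi\times(\pi'\otimes\chi)}(x)$. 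The Hadamard factorization then yields the partial-fraction bound
\[
\Bigl|\tfrac{L'}{L}\bigl(s,\pi\times(\pi'\otimes\chi)\bigr)\Bigr|\ll_{\pi,\pi'}\eta^{-1}\log\bigl(q(|\Im(s)|+3)\bigr)\ll(\log x)^{1/2+o(1)}\qquad(|\Im(s)|\le T),
\]
using the standard count $\#\{\rho:|\Im(\rho)-t|\le1\}\ll_{\pi,\pi'}\log(q(|t|+3))$ and the separation $\Re(s)-\Re(\rho)\ge\eta$ for every zero $\rho$.

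Collecting the pieces: the horizontal segments at height $T$ contribute $\ll x(\log x)^{O_{n,n'}(1)}/T$, the Perron truncation error contributes $\ll x(\log x)^{O_{n,n'}(1)}/T$, and the vertical segment $\Re(s)=1-\eta$ contributes $\ll x^{1-\eta}(\log x)^{1/2+o(1)}\log T$. Since $q(T+1)^{[F:\Q]}\ll(\log x)^{A+B[F:\Q]+o(1)}$ and $\epsilon(A+B[F:\Q])\le\tfrac12$, we get $\eta\log x\gg(\log x)^{1-\epsilon(A+B[F:\Q])}\ge(\log x)^{1/2}$, so $x^{1-\eta}\ll x\exp\bigl(-(\log x)^{1/2}\bigr)$, which dominates every fixed power of $\log x$; taking $B$ larger than $A$ plus all the (absolute, i.e.\ $n,n',[F:\Q]$-dependent) exponents of $\log x$ that appeared then makes all three error contributions $\ll_{\pi,\pi',A}x(\log x)^{-A}$. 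This establishes the displayed per-character estimate, hence \cref{thm:PNTAP}; the constant is ineffective only because $\Cr{ZFR2}$ is. Note that the narrow region \eqref{eqn:Brumley_ZFR} of Brumley is not needed here, since truncating the Perron integral at height $(\log x)^{B}$ already neutralizes the zeros with large imaginary part.

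The crux is the bound $|(L'/L)(s,\pi\times(\pi'\otimes\chi))|\ll(\log x)^{O(1)}$, uniform for $|\Im(s)|\le T$, on a vertical line lying to the left of $\Re(s)=1$. For these generically non-self-dual twisted Rankin--Selberg $L$-functions the de la Vall\'ee Poussin mechanism is unavailable, so \cref{thm:Siegel} supplies the \emph{only} zero-free neighbourhood of the line $\Re(s)=1$; the key point is that, although this neighbourhood shrinks polynomially in $|\Im(s)|$ --- far faster than a de la Vall\'ee Poussin region --- it is still polynomially wide in $1/\log x$ throughout $|\Im(s)|\le(\log x)^{B}$, and this is precisely what is needed to beat every fixed power of $\log x$ once the Perron contour is truncated at that height. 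The remaining matters --- that all ambient powers of $\log x$ are absolute, so they can be absorbed by choosing $B$ first and $\epsilon$ afterwards, and that the errors from ramified primes and from the Perron truncation are harmless --- are routine.
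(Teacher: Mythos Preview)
Your approach is correct and reaches the goal, but it differs from the paper's in two substantive ways. First, you use a sharp-cutoff truncated Perron formula at height $T=(\log x)^{B}$ and shift to a straight vertical line $\Re(s)=1-\eta$; the paper instead smooths the sum with a trapezoidal weight $\phi$ (width $y=x(\log x)^{-A}$), invokes \cref{cor:BT} to control the smoothing error, and deforms the full Mellin line to the curved contour $\mathscr{C}(t)=1-\Cr{C7777}(\N\kq(|t|+1))^{-\epsilon}+it$ that hugs the boundary of the zero-free region. Second, you bound $|L'/L|$ on the shifted contour via Hadamard partial fractions and the zero-count $\ll\log(q(|t|+3))$, obtaining $\ll\eta^{-1}\log(qT)$; the paper instead combines the lower bound $|L|\geq\Cr{ZFR2}C(it,\chi)^{-\epsilon}$ from \cref{thm:Siegel} with the convexity upper bound $|L'|\ll C(it,\chi)^{\epsilon}$ from \cref{lem:Li1} to get $|L'/L|\ll(\N\kq(|t|+1))^{2\epsilon}$ directly, avoiding the Hadamard machinery and zero-counting entirely. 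The paper's route buys a cleaner argument with no truncation or horizontal-segment bookkeeping and no appeal to the explicit formula apparatus; your route is the more classical template and makes the role of the zero-free region perhaps more transparent. Two minor points: the short-interval control needed for your Perron truncation near $\N\ka\approx x$ is exactly \cref{cor:BT} (calling it ``classical'' undersells the input), and the claim that at most one $\chi\in\mathscr{P}(\kq)$ can give a pole is not quite right in general (self-twists of $\pi'$ can produce several), but this is harmless since your per-character estimate handles each $\chi$ independently.
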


\begin{remark}
Let $a,q\geq 1$ be integers such that $\gcd(a,q)=1$, and let $\varphi(q)$ be Euler's totient function. If $F=\Q$ and $q$ is coprime to the conductors of $\pi$ and $\pi'$, then
\[
\sum_{\substack{m\leq x \\ m\equiv a\pmod{q}}}\Lambda_{\pi\times\pi'}(m)=\frac{\mathcal{M}_{\pi\times\pi'}(x)}{\varphi(q)}+O_{\pi,\pi',A}\left(\frac{x}{(\log x)^A}\right),	\qquad q\leq (\log x)^A
\]
by \cref{thm:PNTAP}.  This generalizes the Siegel--Walfisz theorem \cite{Siegel,Walfisz}.
\end{remark}

\subsection{Holomorphy and nonvanishing of symmetric power $L$-functions}

Let $\pi\in\mathfrak{F}_2$. Let $v$ be a place of $F$, and let $F_v$ be the completion of $F$ relative to $v$. We express $\pi$ as a restricted tensor product $\bigotimes_v\pi_v$ of smooth, admissible representations of $\GL_2(F_v)$. If $n\geq 2$, then $\Sym^n\colon \GL_2(\mathbb{C})\to\GL_{n+1}(\mathbb{C})$ is the $(n+1)$-dimensional irreducible representation of $\GL_2(\mathbb{C})$ on symmetric tensors of rank $n$. Let $\varphi_v$ be the two-dimensional representation of the Deligne--Weil group attached to $\pi_v$ and $\Sym^n(\pi_v)$ be the smooth admissible representation of $\GL_{n+1}(F_v)$ attached to the representation $\Sym^n\circ\varphi_v$. By the local Langlands correspondence, $\Sym^n(\pi_v)$ is well-defined for every place $v$ of $F$. Writing
\[
L(s,\pi) = \prod_v L(s,\pi_v) = \prod_{\kp}\prod_{j=1}^2\frac{1}{1-\alpha_{j,\pi}(\kp)\N\kp^{-s}}
\]
for the $L$-function of $\pi$, we define
\[
L(s,\pi,\Sym^n) = \prod_v L(s,\Sym^n(\pi_v)) \doteq 
\prod_{\kp\nmid\kq_{\pi}} \prod_{m=0}^n\frac{1}{1-\alpha_{1,\pi}(\kp)^{m}\alpha_{2,\pi}(\kp)^{n-m}\N\kp^{-s}}.
\]
Here, $\doteq$ means that we have suppressed the more complicated Euler factors at the prime ideals $\kp$ dividing the conductor $\kq_{\pi}$ of $\pi$.

Langlands conjectured that $\Sym^n(\pi) = \bigotimes_v \Sym^n(\pi_v)$ is an automorphic representation of $\GL_{n+1}(\A_F)$. In settings where the conjecture is proved, we write $L(s,\Sym^n(\pi))$ instead of $L(s,\pi,\Sym^n)$. As of now, the conjecture is proved for $n\leq 4$ \cite{GJ,Kim,KimShahidi}, in which case there exists a least integer $r_{n,\pi}\geq 0$ such that $(s+int_{\pi}-1)^{r_{n,\pi}}L(s,\Sym^n(\pi))$ is entire. Moreover, there exists an effectively computable constant $\Cl[abcon]{symm_std}=\Cr{symm_std}(n,[F:\Q])>0$ such that $L(s,\Sym^n(\pi))$ has at most one zero $\rho=\beta-int_{\pi}$, necessarily simple with real part $\beta<1$, in the region
\begin{equation}
\label{eqn:Symm}
\Re(s)\geq 1-\Cr{symm_std}/\log(C(\pi^*)(|\im(s)+nt_{\pi}|+3)).
\end{equation}

Kim and Shahidi~\cite{KimShahidi2} used the results of \cite{GJ,Kim,KimShahidi}, Rankin--Selberg theory, and the Clebsch--Gordan identities to prove that if $5\leq n\leq 8$, then there exists a least integer $r_{n,\pi}\geq 0$ such that $(s+int_{\pi}-1)^{r_{n,\pi}}L(s,\pi,\Sym^n)$ is holomorphic and nonvanishing in the half-plane $\Re(s)\geq 1$. The region of holomorphy and nonvanishing can be widened inside the critical strip using Brumley's narrow zero-free region \eqref{eqn:Brumley_ZFR}, and the region will depend effectively on $\pi$.

Using \cref{thm:Siegel}, we establish an even larger region of holomorphy and nonvanishing, albeit with ineffective dependence on $\pi$. We also accommodate $\GL_1$-twists. In keeping with the notation in \cite{KimShahidi2}, if $\chi\in\mathfrak{F}_1$, then we write
\begin{align*}
L(s,\pi,\Sym^n\otimes\chi) &= \prod_v L(s,\Sym^n(\pi_v)\otimes\chi_v)\\
&\doteq \prod_{\kp\nmid\kq_{\pi}}\prod_{m=0}^{n}\frac{1}{1-\alpha_{1,\pi}(\kp)^{m}\alpha_{2,\pi}(\kp)^{n-m}\chi(\kp)\N\kp^{-s}}
\end{align*}
for the twisted $n$-th symmetric power $L$-function of $\pi$. If $\Sym^n(\pi)$ is an automorphic representation of $\GL_{n+1}(\A_F)$, then we write $L(s,\Sym^n(\pi)\otimes\chi)$ instead.

\begin{theorem}
\label{thm:sym}
Let $(\pi,\chi)\in\mathfrak{F}_2\times\mathfrak{F}_1$. Let $n\leq 8$, and let $r_{n,\pi,\chi}\geq 0$ be the least integer such that the function
\begin{equation}
\label{cL1}
\cL(s,\pi,\Sym^n\otimes\chi)=\left(\frac{s+int_{\pi}+it_{\chi}-1}{s+int_{\pi}+it_{\chi}+1}\right)^{r_{n,\pi,\chi}}
L(s,\pi,\Sym^n\otimes\chi)
\end{equation}
holomorphically continues to $\Re(s)\geq 1$. For all $\epsilon>0$, there exists an ineffective constant $\Cl[abcon]{Sym}=\Cr{Sym}(\pi,\epsilon)\geq 1$ such that if $\sigma\geq 1-\Cr{Sym}^{-1}C(\chi)^{-\epsilon}$, then $\cL(s,\pi,\Sym^n\otimes\chi)$ is holomorphic at $s=\sigma$ and satisfies
\[
\Cr{Sym}^{-1}C(\chi)^{-\epsilon}\leq|\cL(\sigma,\pi,\Sym^n\otimes\chi)|\leq \Cr{Sym}C(\chi)^{\epsilon}.
\]
\end{theorem}

This is the strongest known region of holomorphy and nonvanishing for general fixed $\pi\in\mathfrak{F}_2$. Several remarks are in order. First, in \cref{thm:sym}, both the lower bound and the upper bound rely on \cref{thm:Siegel}. In particular, one cannot deduce the upper bound solely from convexity (see \cref{lem:Li1}). Second, for certain $\pi\in\mathfrak{F}_2$, much more is known. For example, if $F$ is totally real and $\pi\in\mathfrak{F}_2$ corresponds with a primitive holomorphic non-CM Hilbert cusp form over $F$ of weights $k_v\geq 2$ for $v\mid\infty$, then $\Sym^n(\pi)\in\mathfrak{F}_{n+1}$ holds for all $n\geq 1$ by the recent work of Newton and Thorne~\cite{NewtonThorne,NewtonThorne2,NewtonThorne3}. Hence, if $\chi\in\mathfrak{F}_1$, then $L(s,\Sym^n(\pi)\otimes\chi)$ is entire and has the zero-free region \eqref{eqn:Symm}. Third, as a corollary of the aforementioned work of Gelbart and Lapid~\cite[Corollary~2]{GelbartLapid}, there exists an effectively computable constant $\Cl[abcon]{Sym9}=\Cr{Sym9}(\pi)>0$ such that if $t\in\R$, then
\[
|L(1+it,\pi,\Sym^9)|\geq (|t|+3)^{-\Cr{Sym9}}.
\]
Our methods cannot say anything about the holomorphy or nonvanishing of $L(s,\pi,\Sym^9)$ without first knowing that $\Sym^5(\pi)$ is automorphic.

\section{Properties of $L$-functions}
\label{sec:Properties}

Let $F$ be a number field with adele ring $\A_F$. Let $D_F$ be the absolute discriminant of $F$, $\cO_F$ the ring of integers of $F$, and $\N=\N_{F/\Q}$ the norm defined on nonzero ideals $\ka$ of $\cO_F$ by $\N\ka=|\cO_F/\ka|$. For a place $v$ of $F$, let $v\mid\infty$ (resp. $v\nmid\infty$) denote that $v$ is archimedean (resp. non-archimedean), and let $F_v$ be the corresponding completion of $F$. Each $v\nmid\infty$ corresponds with a prime ideal $\kp$ of $\cO_F$.

\subsection{Standard $L$-functions}
\label{subsec:standard}

Let $\mathfrak{F}_n$ be the set of cuspidal automorphic representations $\pi$ of $\GL_n(\A_F)$ whose central character $\omega_\pi$ is unitary, and let $\mathfrak{F}_n^*$ be the subset of those $\pi$'s for which $\omega_\pi$ is trivial on the diagonally embedded positive reals. Every $\pi\in\mathfrak{F}_n$ can be written uniquely as
\begin{equation}
\label{eqn:pidecomp}
\pi=\pi^*\otimes|\cdot|^{it_{\pi}},\qquad \pi^*\in\mathfrak{F}_n^*,\quad t_{\pi}\in\R.
\end{equation}

If $\pi\in\mathfrak{F}_n^*$, then for each place $v$, there exists an irreducible admissible representation $\pi_v$ of $\GL_n(F_v)$, with $\pi_v$ ramified for at most finitely many $v$, such that $\pi$ is a restricted tensor product $\otimes_v \pi_v$. When $v\nmid\infty$ and $\kp$ corresponds with $v$, then we write $\pi_v$ and $\pi_{\kp}$ interchangeably. For each prime ideal $\kp$, there exist $n$ Satake parameters $\alpha_{1,\pi}(\kp),\dotsc,\alpha_{n,\pi}(\kp)\in\mathbb{C}$ such that the standard $L$-function $L(s,\pi)$ of $\pi$ is the absolutely convergent Euler product
\[
L(s,\pi)=\prod_{\kp}L(s,\pi_{\kp}) = \prod_{\kp}\prod_{j=1}^n \frac{1}{1-\alpha_{j,\pi}(\kp)\N\kp^{-s}},\qquad\Re(s)>1.
\]
For $v\mid \infty$, define
\[
\Gamma_v(s)=\begin{cases}
\pi^{-s/2}\Gamma(s/2)&\mbox{if $F_v=\R$,}\\
2(2\pi)^{-s}\Gamma(s)&\mbox{if $F_v=\mathbb{C}$.}
\end{cases}
\]
There exist $n$ Langlands parameters $\mu_{1,\pi}(v),\dotsc,\mu_{n,\pi}(v)\in\mathbb{C}$ such that
\[
L(s,\pi_{v})=\prod_{j=1}^n \Gamma_v(s+\mu_{j,\pi}(v)).
\]

Let $\kq_{\pi}$ denote the conductor of $\pi$, and let $\mathbbm{1}\in\mathfrak{F}_1^*$ denote the trivial representation (whose $L$-function is the Dedekind zeta function $\zeta_F(s)$). Let $\delta_{\pi}=1$ if $\pi=\mathbbm{1}$, and $\delta_{\pi}=0$ otherwise. If
\[
L_{\infty}(s,\pi)=\prod_{v\mid\infty}L(s,\pi_v),
\]
then the completed $L$-function
\[
\Lambda(s,\pi)=(s(1-s))^{\delta_{\pi}}(D_F^n\N\kq_{\pi})^{s/2}L_{\infty}(s,\pi)L(s,\pi)
\]
is entire of order $1$. There exists $W(\pi)\in\CC$ of modulus $1$ such that $\Lambda(s,\pi)$ satisfies the functional equation
\[
\Lambda(s,\pi)=W(\pi)\Lambda(1-s,\tilde\pi)=W(\pi)\overline{\Lambda(1-\bar{s},\pi)}.
\]
The nontrivial zeros of $L(s,\pi)$ are the zeros of $\Lambda(s,\pi)$, and they lie in the critical strip $0<\Re(s)<1$. It is conjectured (GRH) that they actually lie on the critical line $\Re(s)=1/2$. Finally, the analytic conductor is $C(\pi)=C(0,\pi)$, where
\[
C(it,\pi) = D_F^n\N\kq_{\pi}\prod_{v\mid\infty}\prod_{j=1}^n (|\mu_{j,\pi}(v)+it|+3)^{[F_v:\R]}.
\]

More generally, these quantities are defined for all $\pi\in\mathfrak{F}_n$, and we have in particular
\[
L(s,\pi)=L(s+it_{\pi},\pi^*),\qquad C(it,\pi)=C(it+it_{\pi},\pi^*).
\]

\subsection{Rankin--Selberg $L$-functions}
\label{subsec:RS}

Let $\pi\in\mathfrak{F}_n^*$ and $\pi'\in\mathfrak{F}_{n'}^*$. For each $\kp\mid\kq_{\pi}\kq_{\pi'}$, there exist complex numbers $\alpha_{j,j',\pi\times\pi'}(\kp)$ with $1\leq j\leq n$ and $1\leq j'\leq n$ such that if
\[
L(s,\pi_{\kp}\times\pi_{\kp}')=\begin{cases}
\prod_{j=1}^n \prod_{j'=1}^{n'}(1-\alpha_{j,\pi}(\kp)\alpha_{j',\pi'}(\kp)\N\kp^{-s})^{-1}&\mbox{if $\kp\nmid\kq_{\pi}\kq_{\pi'}$,}\\
\prod_{j=1}^n \prod_{j'=1}^{n'}(1-\alpha_{j,j',\pi\times\pi'}(\kp)\N\kp^{-s})^{-1}&\mbox{if $\kp\mid\kq_{\pi}\kq_{\pi'}$,}
\end{cases}
\]
then the Rankin--Selberg $L$-function $L(s,\pi\times\pi')$ equals the absolutely convergent product
\begin{equation}
\label{eqn:euler_prod}
L(s,\pi\times\pi')=\prod_{\kp}L(s,\pi_{\kp}\times\pi_{\kp}')=\sum_{\ka}\frac{\lambda_{\pi\times\pi'}(\ka)}{\N\ka^s},\qquad \Re(s)>1.	
\end{equation}
By \cite[Lemma~3.1]{JLW}, we have the bound
\begin{equation}
\label{eqn:JLW_decouple}
|\lambda_{\pi\times\pi'}(\ka)|\leq \sqrt{\lambda_{\pi\times\tilde{\pi}}(\ka)\lambda_{\pi'\times\tilde{\pi}'}(\ka)}.	
\end{equation}

Let $\kq_{\pi\times\pi'}$ be the conductor of $L(s,\pi\times\pi')$. For each $v\mid\infty$, $1\leq j\leq n$, and $1\leq j'\leq n'$, there exists a Langlands parameter $\mu_{j,j',\pi\times\pi'}(v)$ such that if
\begin{equation}
\label{eqn:L1}
L_{\infty}(s,\pi\times\pi')=\prod_{v\mid\infty}\prod_{j=1}^n \prod_{j'=1}^{n'}\Gamma_v(s+\mu_{j,j',\pi\times\pi'}(v)),\quad \delta_{\pi\times\pi'}=\begin{cases}
	1&\mbox{if $\pi'=\tilde{\pi}$,}\\
	0&\mbox{otherwise,}
\end{cases}
\end{equation}
then the completed $L$-function
\begin{equation}
\label{eqn:L2}
\Lambda(s,\pi\times\pi')=(s(1-s))^{\delta_{\pi\times\pi'}}(D_F^{nn'}\N\kq_{\pi\times\pi'})^{s/2} L_{\infty}(s,\pi\times\pi')L(s,\pi\times\pi')
\end{equation}
is entire of order 1.  There exists $W(\pi\times\pi')\in\CC$ of modulus $1$ such that $\Lambda(s,\pi\times\pi')$ satisfies the functional equation
\begin{equation}
\label{eqn:FE}
\Lambda(s,\pi\times\pi')=W(\pi\times\pi')\Lambda(1-s,\tilde{\pi}\times\tilde{\pi}')=W(\pi\times\pi')\overline{\Lambda(1-\bar{s},\pi\times\pi')}.
\end{equation}

The absolute convergence of \eqref{eqn:euler_prod} ensures that $\re(\mu_{j,j',\pi\times\pi'}(v))\geq -1$. The nontrivial zeros of $L(s,\pi\times\pi')$ are the zeros of $\Lambda(s,\pi\times\pi')$, and they lie in the critical strip $0<\Re(s)<1$. It is conjectured (GRH) that they actually lie on the critical line $\Re(s)=1/2$. Finally, the analytic conductor is $C(\pi\times\pi')=C(0,\pi\times\pi')$, where
\begin{equation}
\label{eqn:C}
C(it,\pi\times\pi')=D_F^{nn'}\N\kq_{\pi\times\pi'}\prod_{v\mid\infty}\prod_{j=1}^n \prod_{j'=1}^{n'}
(|\mu_{j,j',\pi\times\pi'}(v)+it|+3)^{[F_v:\R]}.
\end{equation}
We have the bounds \cite[Lemma~A.2]{Humphries}
\begin{equation}
\label{eqn:BH}
C(it,\pi\times\pi')\ll_{n,n'} C(\pi\times\pi') (|t|+1)^{nn'[F:\Q]}\ll_{n,n'} C(\pi)^{n'}C(\pi')^n (|t|+1)^{nn'[F:\Q]}.
\end{equation}

More generally, for $\pi\in\mathfrak{F}_{n}$ and $\pi'\in\mathfrak{F}_{n'}$, we have
\[
L(s,\pi\times\pi') = L(s+it_{\pi}+it_{\pi'},\pi^*\times\pi'^*),\quad 
C(it,\pi\times\pi')=C(it+it_{\pi}+it_{\pi'},\pi^*\times\pi'^*).
\]

\subsection{Local bounds}

Let $\pi\in\mathfrak{F}_n$. At each prime ideal $\kp$ of $\cO_F$ and each $v\mid \infty$, there exists $\theta_n\in[0,1/2-1/(n^2+1)]$ such that \cite{LRS,MullerSpeh}
\begin{equation}
\label{eqn:GRC1}
|\alpha_{j,\pi}(\kp)|\leq \N\kp^{\theta_n},\qquad \re(\mu_{j,\pi}(v))\geq -\theta_n.
\end{equation}
Using the explicit descriptions of $\alpha_{j,j',\pi\times\pi'}(\kp)$ in \cite[Appendix]{SoundararajanThorner} and $\mu_{j,j',\pi\times\pi'}(v)$ in \cite[Section~3]{MullerSpeh}, we find that if $\pi\in\mathfrak{F}_n$ and $\pi'\in\mathfrak{F}_{n'}$, then
\begin{equation}
\label{eqn:GRC2}
|\alpha_{j,j',\pi\times\pi'}(\kp)|\leq \N\kp^{\theta_n+\theta_{n'}},\qquad \re(\mu_{j,j',\pi\times\pi'}(v))\geq -\theta_n-\theta_{n'}.
\end{equation}

\subsection{Isobaric sums}
\label{subsec:isobaric}

The Langlands theory of Eisenstein series associates to any $\ell$-tuple 
$(\pi_1,\dotsc,\pi_\ell)\in\mathfrak{F}_{n_1}\times\dotsb\times \mathfrak{F}_{n_\ell}$ an automorphic representation of $\GL_{n_1+\dotsb+n_\ell}(\A_F)$, the isobaric sum, denoted $\Pi=\pi_1\boxplus\dotsb\boxplus\pi_\ell$. The contragredient is $\tilde{\Pi}=\tilde{\pi}_1\boxplus\dotsb\boxplus\tilde{\pi}_\ell$, and
\[
L(s,\Pi)=\prod_{j=1}^\ell L(s,\pi_j),\qquad \Re(s)>1.
\]
Given two isobaric sums $\Pi=\pi_1\boxplus\dotsb\boxplus\pi_\ell$ and $\Pi'=\pi_1'\boxplus\dotsb\boxplus\pi_m'$, we define the Rankin--Selberg $L$-function
\[
L(s,\Pi\times\Pi')=\prod_{j=1}^\ell \prod_{k=1}^m L(s,\pi_j\times\pi_k')=\sum_{\ka}\frac{\lambda_{\Pi\times\Pi'}(\ka)}{\N\ka^s},\qquad \Re(s)>1.
\]
We let $\mathfrak{A}_n$ denote the set of isobaric automorphic representations of $\GL_{n}(\A_F)$.

\begin{lemma}[{\cite[Lemma~a]{HoffsteinRamakrishnan}}]
\label{lem:nonneg}
If $\Pi\in\mathfrak{A}_n$, then $L(s,\Pi\times\tilde{\Pi})$ has nonnegative Dirichlet coefficients, as do $\log L(s,\Pi\times\tilde{\Pi})$ and $-L'(s,\Pi\times\tilde{\Pi})/L(s,\Pi\times\tilde{\Pi})$.
\end{lemma}

\subsection{Convexity}

Our proofs require strong bounds for Rankin--Selberg $L$-functions and their derivatives. 

\begin{lemma}
\label{lem:Li1}
For $(\pi,\pi')\in\mathfrak{F}_n\times\mathfrak{F}_{n'}$, consider the holomorphic function
\begin{equation}
\label{cL2}
\cL(s,\pi\times\pi')=\left(\frac{s+it_{\pi}+it_{\pi'}-1}{s+it_{\pi}+it_{\pi'}+1}\right)^{\delta_{\pi^*\times\pi'^*}}L(s,\pi\times\pi'),\qquad\Re(s)>-1.
\end{equation}
If $j\geq 0$, $\sigma\geq 0$, and $\epsilon>0$, then
\begin{equation}
\label{eqn:Lbound1}
\cL^{(j)}(\sigma,\pi\times\pi')\ll_{n,n',[F:\Q],j,\epsilon}C(\pi\times\pi')^{\max(1-\sigma,0)/2+\epsilon}.
\end{equation}
In particular, if $\delta_{\pi^*\times\pi'^*}=0$ or $|t_{\pi}+t_{\pi'}|>1$, then
\[
L^{(j)}(\sigma,\pi\times\pi')\ll_{n,n',[F:\Q],j,\epsilon}C(\pi\times\pi')^{\max(1-\sigma,0)/2+\epsilon}.
\]
\end{lemma}

\begin{proof}
Using the notation $t=t_\pi+t_{\pi'}$, we can rewrite the bound \eqref{eqn:Lbound1} as
\begin{equation}
\label{eqn:Lbound2}\cL^{(j)}(\sigma+it,\pi^*\times\pi'^*)\ll_{n,n',[F:\Q],j,\epsilon}C(it,\pi^*\times\pi'^*)^{\max(1-\sigma,0)/2+\epsilon},
\end{equation}
where
\[
\cL(s,\pi^*\times\pi'^*)=\left(\frac{s-1}{s+1}\right)^{\delta_{\pi^*\times\pi'^*}}L(s,\pi^*\times\pi'^*),\qquad\Re(s)>-1.
\]
We shall prove \eqref{eqn:Lbound2} for any $t\in\R$, and at this point we relabel $(\pi^*,\pi'^*)$ as $(\pi,\pi')$ for notational simplicity. That is, we shall assume $(\pi,\pi')\in\mathfrak{F}_n^*\times\mathfrak{F}_{n'}^*$ for the rest of this proof.

Without loss of generality, $0<\epsilon<1$. It suffices to show for all $\sigma\geq -\epsilon/2$ that
\begin{equation}
\label{eqn:Lbound}
\cL(\sigma+it,\pi\times\pi')\ll_{n,n',[F:\Q],\epsilon}C(it,\pi\times\pi')^{\max(1-\sigma,0)/2+3\epsilon/4}.
\end{equation}
Indeed, assuming this statement and using the bound
\[
|\cL^{(j)}(s,\pi\times\pi')|\leq j!(2/\epsilon)^j\sup_{|z-s|=\epsilon/2}|\cL(z,\pi\times\pi')|
\]
that follows from Cauchy's formula and the triangle inequality for complex line integrals, we readily obtain the required conclusion (for all $j\geq 0$, $\sigma\geq 0$, and $t\in\R$).

It remains to prove \eqref{eqn:Lbound} for all $\sigma\geq -\epsilon/2$. Assume first that $\sigma\geq 1+\epsilon/2$. It follows from 
\eqref{eqn:JLW_decouple} and the Cauchy--Schwarz inequality that
\[
|L(\sigma+it,\pi\times\pi')|\leq(L(\sigma,\pi\times\tilde\pi)L(\sigma,\pi'\times\tilde\pi'))^{1/2}.
\]
The right-hand side is decreasing in $\sigma$; hence, by \cite[Theorem~2]{Li} we infer that
\[
\log|L(\sigma+it,\pi\times\pi')|\ll_{n,n',[F:\Q],\epsilon}
\frac{\log C(\pi\times\tilde\pi)}{\log\log C(\pi\times\tilde\pi)}+\frac{\log C(\pi'\times\tilde\pi')}{\log\log C(\pi'\times\tilde\pi')}.
\]
Then, using \cite[Lemma~2.1]{SoundararajanThorner}, we conclude that
\begin{equation}
\label{eqn:bound_on_1-line}
L(\sigma+it,\pi\times\pi')\ll_{n,n',[F:\Q],\epsilon}C(it,\pi\times\pi')^{\epsilon/4}.
\end{equation}
Concerning the application of \cite[Lemma~2.1]{SoundararajanThorner}, three remarks are in order. First, this result is meant for 
$F=\Q$, but a generalization to an arbitrary number field $F$ is straightforward (with extra constants depending on $[F:\Q]$ in the exponents). Second, \cite{SoundararajanThorner} tacitly assumes that each central character is trivial on the diagonally embedded positive reals, but this assumption is not used in the proof of \cite[Lemma~2.1]{SoundararajanThorner}. Third, we rely on the fact that $C(it,\pi\times\pi')$ in \eqref{eqn:bound_on_1-line} equals $C(\pi\times(\pi'\otimes|\cdot|^{it}))$.

Finally, we verify \eqref{eqn:Lbound} in the strip $-\epsilon/2\leq\sigma< 1+\epsilon/2$. We express the boundary value $L(-\epsilon/2+it,\pi\times\tilde\pi)$ from $L(1+\epsilon/2-it,\pi\times\tilde\pi)$ using the functional equation \eqref{eqn:FE} and the definitions \eqref{eqn:L1}--\eqref{eqn:L2}. Then, applying \cite[Lemma~3.2]{Harcos} and recalling \eqref{eqn:C}, we obtain from \eqref{eqn:bound_on_1-line} the bound
\begin{equation}
\label{eqn:bound_on_0-line}
L(-\epsilon/2+it,\pi\times\pi')\ll_{n,n',[F:\Q],\epsilon} C(it,\pi\times\pi')^{1/2+3\epsilon/4}.
\end{equation}
We interpolate between \eqref{eqn:bound_on_1-line} and \eqref{eqn:bound_on_0-line} using the Phragm{\'e}n--Lindel{\"o}f principle, and the lemma follows.
\end{proof}

\subsection{A weak Brun--Titchmarsh bound}

Let $\pi\in\mathfrak{F}_n$ and $\pi'\in\mathfrak{F}_{n'}$. Recall the Dirichlet series definition of $\Lambda_{\pi\times\pi'}(\ka)$ in \eqref{eqn:Lambda_def}. 

\begin{lemma}
\label{lem:BT}
Let $(\pi,\pi')\in\mathfrak{F}_{n}\times\mathfrak{F}_{n'}$. There exists an effectively computable constant $\Cl[abcon]{BTrange}=\Cr{BTrange}(n,n',[F:\Q])>0$ such that if
\[X\geq\max(C(\pi\times\tilde{\pi}),C(\pi'\times\tilde{\pi}'))^{\Cr{BTrange}}
\qquad\text{and}\qquad 1\leq T\leq X^{1/(16\max(n,n')^2[F:\Q])},
\]
then
\[
\sum_{X<\N\ka\leq Xe^{1/T}}|\Lambda_{\pi\times\pi'}(\ka)|\ll_{n,n',[F:\Q]} \frac{X}{T}.
\]
\end{lemma}

\begin{proof}
By \cite[Proposition~A.1]{SoundararajanThorner}, we have the bound
\begin{equation}
\label{eqn:Brumley_decouple}
|\Lambda_{\pi\times\pi'}(\ka)|\leq \sqrt{\Lambda_{\pi\times\tilde{\pi}}(\ka)\Lambda_{\pi'\times\tilde{\pi}'}(\ka)}.	
\end{equation}
Combined with the Cauchy--Schwarz inequality, \eqref{eqn:Brumley_decouple} yields
\begin{equation}
\label{eqn:BTCS}
\sum_{X<\N\ka\leq Xe^{1/T}}|\Lambda_{\pi\times\pi'}(\ka)|\leq \left(\sum_{X<\N\ka\leq Xe^{1/T}}\Lambda_{\pi\times\tilde{\pi}}(\ka)\right)^{1/2}\left(\sum_{X<\N\ka\leq Xe^{1/T}}\Lambda_{\pi'\times\tilde{\pi}'}(\ka)\right)^{1/2}.
\end{equation}

By \cite[Proposition~4.1]{HumphriesThorner}, there exist absolute and effectively computable constants $\Cl[abcon]{BT1}>0$ and $\Cl[abcon]{BT2}>0$ such that if $\log\log C(\pi\times\tilde{\pi})\geq \Cr{BT1}n^4[F:\Q]^2$, $X\geq e^{\Cr{BT2}n^4[F:\Q]^2}C(\pi\times\tilde{\pi})^{32n^2[F:\Q]}$, and $1\leq T\leq X^{1/(16n^2[F:\Q])}$, then
\[
\sum_{X<\N\ka\leq Xe^{1/T}}\Lambda_{\pi\times\tilde{\pi}}(\ka)\ll n^2[F:\Q]\frac{X}{T}.
\]
A careful inspection of the proof of \cite[Proposition~4.1]{HumphriesThorner} shows if the lower bound on $C(\pi\times\tilde{\pi})$ is removed, then the dependency on $n$ and $[F:\Q]$ in the implied constant and the range of $X$ become worse, which is not a concern here. We conclude that there exists an effectively computable constant $\Cr{BTrange}=\Cr{BTrange}(n,[F:\Q])>0$ such that if $X\geq C(\pi\times\tilde{\pi})^{\Cr{BTrange}}$ and $1\leq T\leq X^{1/(16n^2[F:\Q])}$, then
\[
\sum_{X<\N\ka\leq Xe^{1/T}}\Lambda_{\pi\times\tilde{\pi}}(\ka)\ll_{n,[F:\Q]} \frac{X}{T}.
\]
To finish, we similarly bound the second factor on the right-hand side of \eqref{eqn:BTCS}.
\end{proof}

\begin{corollary}
\label{cor:BT}
Let $(\pi,\pi',\chi)\in\mathfrak{F}_{n}\times\mathfrak{F}_{n'}\times\mathfrak{F}_1$. If
\[
X\geq 1\qquad\text{and}\qquad X\geq Y\geq X^{1-1/(16\max(n,n')^2[F:\Q])},
\]
then
\begin{equation}
\label{eqn:corBTbound}
\sum_{X<\N\ka\leq X+Y}|\Lambda_{\pi\times(\pi'\otimes\chi)}(\ka)|\ll_{\pi,\pi'}Y.
\end{equation}
\end{corollary}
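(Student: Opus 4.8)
The plan is to deduce this directly from \cref{lem:BT} applied to the pair $(\pi,\pi'\otimes\chi)\in\mathfrak{F}_n\times\mathfrak{F}_{n'}$, after covering $(X,X+Y]$ by short intervals of the shape $(Z,Ze^{1/S}]$ occurring in \eqref{eqn:BTbound}. The subtlety is that the threshold on $Z$ in \cref{lem:BT} involves $C((\pi'\otimes\chi)\times\widetilde{(\pi'\otimes\chi)})$, which at first sight could grow with $C(\chi)$; so the first thing I would record is that $\widetilde{\pi'\otimes\chi}=\tilde\pi'\otimes\tilde\chi$, and that the $\GL_1$-twists $\chi$ of $\pi'$ and $\tilde\chi$ of $\tilde\pi'$ enter every local and archimedean Rankin--Selberg factor only through the trivial product $\chi\tilde\chi$. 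Hence $L(s,(\pi'\otimes\chi)\times\widetilde{(\pi'\otimes\chi)})$, its gamma factor, and its conductor all coincide with those of $L(s,\pi'\times\tilde\pi')$; in particular $C((\pi'\otimes\chi)\times\widetilde{(\pi'\otimes\chi)})=C(\pi'\times\tilde\pi')$ by \eqref{eqn:C}, and $\Lambda_{(\pi'\otimes\chi)\times\widetilde{(\pi'\otimes\chi)}}(\ka)=\Lambda_{\pi'\times\tilde\pi'}(\ka)$ for all $\ka$. Writing $\kappa=1/(16\max(n,n')^2[F:\Q])$ and $X_0=\max(C(\pi\times\tilde\pi),C(\pi'\times\tilde\pi'))^{\Cr{BTrange}}$, \cref{lem:BT} for $(\pi,\pi'\otimes\chi)$ then reads: for $Z\ge X_0$ and $1\le S\le Z^{\kappa}$,
\[ \sum_{Z<\N\ka\le Ze^{1/S}}|\Lambda_{\pi\times(\pi'\otimes\chi)}(\ka)|\ll_{n,n',[F:\Q]}\frac{Z}{S}, \]
with $X_0$ depending only on $\pi$ and $\pi'$.

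Note that the hypotheses together with $0<\kappa<1$ force $Y\ge X^{1-\kappa}\ge 1$. If $X<X_0$, then every $\ka$ in the sum satisfies $\N\ka\le X+Y\le 2X<2X_0$, so by \eqref{eqn:Brumley_decouple} and the twist-invariance just noted, $|\Lambda_{\pi\times(\pi'\otimes\chi)}(\ka)|\le\sqrt{\Lambda_{\pi\times\tilde\pi}(\ka)\Lambda_{\pi'\times\tilde\pi'}(\ka)}$, and Cauchy--Schwarz bounds the sum by $\bigl(\sum_{\N\ka\le 2X_0}\Lambda_{\pi\times\tilde\pi}(\ka)\bigr)^{1/2}\bigl(\sum_{\N\ka\le 2X_0}\Lambda_{\pi'\times\tilde\pi'}(\ka)\bigr)^{1/2}$ (the coefficients being nonnegative by \cref{lem:nonneg}), which is a finite quantity depending only on $\pi$ and $\pi'$, hence $\ll_{\pi,\pi'}1\le Y$.

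For the main case $X\ge X_0$ I would set $T=X^{\kappa}\ (\ge X_0^{\kappa}\ge 1)$ and cover $(X,X+Y]$ by the consecutive intervals $(Xe^{i/T},Xe^{(i+1)/T}]$ for $0\le i\le k-1$, where $k$ is the least positive integer with $Xe^{k/T}\ge X+Y$; then $k\le T\log(1+Y/X)+1$, and every left endpoint $Xe^{i/T}$ lies in $[X,X+Y)\subseteq[X_0,2X)$, so the displayed form of \cref{lem:BT} with smoothing parameter $T$ applies to each interval and contributes $\ll_{n,n',[F:\Q]}Xe^{i/T}/T\le 2X^{1-\kappa}$. Summing the $k$ contributions and using $\log(1+Y/X)\le Y/X$ and $Y\ge X^{1-\kappa}$,
\[ \sum_{X<\N\ka\le X+Y}|\Lambda_{\pi\times(\pi'\otimes\chi)}(\ka)|\ll_{n,n',[F:\Q]}k\,X^{1-\kappa}\le 2X\log(1+Y/X)+2X^{1-\kappa}\ll Y, \]
which together with the previous case gives the corollary.

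The covering step and the case $X<X_0$ are routine; the one genuinely substantive point is the $\GL_1$-twist invariance $C((\pi'\otimes\chi)\times\widetilde{(\pi'\otimes\chi)})=C(\pi'\times\tilde\pi')$, which is exactly what makes \cref{lem:BT} applicable with a $\chi$-free threshold. I would justify it from the explicit local factors in \cite[Appendix]{SoundararajanThorner} and \cite[Section~3]{MullerSpeh} together with the local Langlands correspondence (tensoring a Weil--Deligne representation by $\chi\tilde\chi=\mathbbm 1$ changes nothing). This is the only place the argument could go wrong if stated carelessly: without it, the implied constant would depend on $C(\chi)$ and the uniformity in $\chi$ would be lost.
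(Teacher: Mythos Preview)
Your proposal is correct and follows essentially the same approach as the paper: the case split at $X_0=\max(C(\pi\times\tilde\pi),C(\pi'\times\tilde\pi'))^{\Cr{BTrange}}$, the handling of $X<X_0$ via \eqref{eqn:Brumley_decouple} and Cauchy--Schwarz, and the crucial twist invariance $(\pi'\otimes\chi)\times\widetilde{(\pi'\otimes\chi)}=\pi'\times\tilde\pi'$ (which the paper leaves implicit) all match. The only difference is that for $X\ge X_0$ the paper avoids your covering argument entirely by taking $T=X/Y$ in a single application of \cref{lem:BT}, using $(X,X+Y]\subseteq(X,Xe^{1/T}]$ (since $e^{1/T}\ge 1+1/T$); your dyadic covering with $T=X^{\kappa}$ works but is unnecessarily elaborate.
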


\begin{proof}
We define $X_0=\max(C(\pi\times\tilde{\pi}),C(\pi'\times\tilde{\pi}'))^{\Cr{BTrange}}$. If $X\geq X_0$, then \eqref{eqn:corBTbound} follows from \cref{lem:BT} upon setting $T=X/Y$. If $X<X_0$, then \eqref{eqn:corBTbound} follows from \eqref{eqn:Brumley_decouple} and the Cauchy--Schwarz inequality.
\end{proof}

\section{Overall strategy}
\label{sec:Strategy}

Our proof of \cref{thm:Siegel} rests on the following key proposition.

\begin{proposition}
\label{prop:P1}
Let $(\pi,\pi',\chi)\in\mathfrak{F}_n\times\mathfrak{F}_{n'}\times\mathfrak{F}_1$, $\epsilon\in(0,1/2)$, and $\beta\in(1-\epsilon/8,1)$. 
Assume that the following $L$-functions are entire:
\begin{equation}
\label{eqn:Lfns}
L(s,\pi\times\pi'),\qquad L(s,\pi\times(\pi'\otimes\chi)),\qquad L(s,\pi\times(\pi'\otimes\chi^2)).
\end{equation}
If $L(\beta,\pi\times\pi')=0$, then
\begin{equation}
\label{eqn:main_lower}
|L(1,\pi\times(\pi'\otimes\chi))|\gg_{\pi,\pi',\beta,\epsilon}C(\chi)^{-(n+n')^2\epsilon}.
\end{equation}
\end{proposition}

Assuming \cref{prop:P1}, we will prove that, for any $\epsilon\in(0,1/2)$, there exists an ineffective constant $\Cl[abcon]{ZFR3}=\Cr{ZFR3}(\pi,\pi',\epsilon)>0$ such that for any $\chi\in\mathfrak{F}_1$ we have
\begin{equation}
\label{eqn:finalbound3}
|L(1,\pi\times(\pi'\otimes\chi))|\geq\Cr{ZFR3}C(\chi)^{-\epsilon}.
\end{equation}
After this, \cref{thm:Siegel} follows in a routine way (see \cref{sec:finish}). Let us define
\begin{equation}
\label{eqn:epsprime}
\epsilon'=\frac{\epsilon}{8(n+n')^2}.
\end{equation}
By a standard argument (see \cref{subsec:inital_reduction}), we only need to prove \eqref{eqn:finalbound3} 
in the case when $L(s,\pi\times(\pi'\otimes\chi))$ has a zero in the half-plane $\Re(s)>1-\epsilon'$.

It is instructive to first consider the case $n\neq n'$ of \eqref{eqn:finalbound3}. Indeed, in this case, the $L$-functions in \eqref{eqn:Lfns} are automatically entire. Moreover, by the ``existence of bad zero'' hypothesis, there exists $(\beta,\psi)\in(1-\epsilon',1)\times \mathfrak{F}_1$ depending at most on $(\pi,\pi',\epsilon)$ such that
\begin{equation}
\label{eqn:assumed_zero}
L(\beta,\pi\times(\pi'\otimes\psi))=0.
\end{equation}
Writing $\pi''=\pi'\otimes\psi\in\mathfrak{F}_{n'}$ and $\chi' = \bar{\psi}\chi\in\mathfrak{F}_1$, we apply \cref{prop:P1} with 
$(\pi,\pi'',\chi',8\epsilon')$ in the role of $(\pi,\pi',\chi,\epsilon)$. By $n\neq n'$ and \eqref{eqn:assumed_zero}, all the assumptions of \cref{prop:P1} are satisfied, hence we conclude that
\[
|L(1,\pi\times(\pi'\otimes\chi))|=|L(1,\pi\times(\pi''\otimes\chi'))|\gg_{\pi,\pi'',\beta,\epsilon}C(\chi')^{-\epsilon}.\]
This implies \eqref{eqn:finalbound3} readily, since $\pi''$ (resp. $\chi'$) differs from $\pi'$ (resp. $\chi$) by a twist of $\psi$, and both $\psi$ and $\beta$ depend only on $(\pi,\pi',\epsilon)$.

If $n=n'$, then we must account for the possible poles of the $L$-functions in \eqref{eqn:Lfns} when applying \cref{prop:P1}.  Keeping this in mind, we shall prove \eqref{eqn:finalbound3} in three steps.  Each step relies on an application of 
\cref{prop:P1} and allows us to verify \eqref{eqn:finalbound3} for a larger \emph{subgroup} of characters $\chi\in\mathfrak{F}_1$ than before. Let us introduce the notation
\[
\mathfrak{F}_1^{(j)}=\{\chi\in\mathfrak{F}_1:{\chi^*}^j=\mathbbm{1}\},
\]
and note the chain of subgroups $\mathfrak{F}_1^{(1)}\leq\mathfrak{F}_1^{(2)}\leq\mathfrak{F}_1$.

In the first step, we prove \eqref{eqn:finalbound3} for $\chi\in\mathfrak{F}_1^{(1)}$. In the second step, we extend the validity of \eqref{eqn:finalbound3} to $\chi\in\mathfrak{F}_1^{(2)}$. In the third step, we prove \eqref{eqn:finalbound3} in full generality.
For now, let us assume that $L(s,\pi\times(\pi'\otimes\chi))$ is entire. At each application of \cref{prop:P1}, we 
rely on the ``existence of bad zero'' hypothesis that we paraphrase as follows. For the character $\chi\in\mathfrak{F}_1$ under consideration, there exists $(\beta,\eta)\in(1-\epsilon',1)\times\mathfrak{F}_1^{(1)}$ such that 
\[L(\beta,\pi\times(\pi'\otimes\chi\eta))=0.\]
However, we cannot use $(\beta,\eta)$ directly because the implied constants are not allowed to depend on $\chi$. Instead, we use that each step operates with a given subgroup $G\in\{\mathfrak{F}_1^{(1)},\mathfrak{F}_1^{(2)},\mathfrak{F}_1\}$, and by the above hypothesis there exists $(\beta,\psi)\in(1-\epsilon',1)\times G$ such that 
\begin{equation}
\label{eqn:siegelhypothesis}
\text{$L(s,\pi\times(\pi'\otimes\psi))$ is entire and vanishes at $s=\beta$.}
\end{equation}
This \emph{weaker} hypothesis follows from the previous one (upon setting $\psi=\chi\eta$), and plays the role of \eqref{eqn:assumed_zero}. Its advantage lies in the observation that since $G$ is fixed, we can again select an admissible pair $(\beta,\psi)$ solely in terms of the triple $(\pi,\pi',\epsilon)$.

\section{Establishing the key proposition}
\label{sec:proof_prop}

The goal of this section is to prove \Cref{prop:P1}. We begin with a finiteness statement on $\GL_1$-twists, which will help us to control the poles of the $L$-functions in \eqref{eqn:Lfns}.

\begin{lemma}
\label{lem:twists}
Let $\pi\in\mathfrak{F}_n$ and $\pi'\in\mathfrak{F}_{n'}$. There are finitely many $\chi\in\mathfrak{F}_1^*$ such that $L(s,\pi\times(\pi'\otimes\chi))$ has a pole.
\end{lemma}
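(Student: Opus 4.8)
The plan is to convert the analytic condition into a representation-theoretic one and then exploit the finiteness of self-twists. First I would pass to the starred representations: since $\chi\in\mathfrak{F}_1^*$, the twist $\pi'^*\otimes\chi$ lies again in $\mathfrak{F}_{n'}^*$, and a comparison with \eqref{eqn:pidecomp} shows $(\pi'\otimes\chi)^*=\pi'^*\otimes\chi$ and $t_{\pi'\otimes\chi}=t_{\pi'}$, so that
\[
L(s,\pi\times(\pi'\otimes\chi))=L\bigl(s+it_\pi+it_{\pi'},\ \pi^*\times(\pi'^*\otimes\chi)\bigr).
\]
By the properties of Rankin--Selberg $L$-functions recalled in \cref{subsec:RS}, the right-hand side has a pole (necessarily simple) if and only if $\pi'^*\otimes\chi\cong\widetilde{\pi^*}$, which in particular forces $n=n'$; if $n\neq n'$, the lemma holds vacuously, so assume $n=n'$. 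Hence it suffices to prove that $\{\chi\in\mathfrak{F}_1^*:\pi'^*\otimes\chi\cong\widetilde{\pi^*}\}$ is finite. If this set is empty we are done, and otherwise, fixing one element $\chi_0$, the set is the coset $\chi_0\cdot S$ of the self-twist group $S:=\{\eta\in\mathfrak{F}_1^*:\pi'^*\otimes\eta\cong\pi'^*\}$; so the whole problem reduces to showing $S$ is finite.

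To bound $S$ I would use two constraints. Comparing central characters in $\pi'^*\otimes\eta\cong\pi'^*$ gives $\eta^{n}=1$, so every $\eta\in S$ has order dividing $n$, hence finite order. Comparing conductors gives $\kq_{\pi'^*\otimes\eta}=\kq_{\pi'^*}$; invoking a standard bound for the conductor of a twist---namely that twisting an $n$-dimensional local object by a $\GL_1$-character of conductor exponent $a$ at a prime $\kp$ increases the local conductor exponent roughly to $na$ once $a$ is large relative to that of the object---this equality forces the conductor exponent of $\eta$ at every prime to be bounded purely in terms of the one of $\pi'^*$, so $\kq_\eta$ divides a fixed ideal $\m=\m(\pi'^*)$. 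Finally, a unitary Hecke character of finite order and conductor dividing $\m$ factors through the finite narrow ray class group of modulus $\m\prod_{v\mid\infty}v$, so there are only finitely many such $\eta$; this gives $|S|<\infty$.

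The only substantive input is the conductor estimate in the second step: one genuinely needs that twisting $\pi'^*$ by a character of large conductor yields a representation of large conductor, as otherwise $\kq_{\pi'^*\otimes\eta}=\kq_{\pi'^*}$ would be consistent with $\eta$ of unbounded conductor. I would emphasize that both constraints are needed, since over a general number field there are already infinitely many infinite-order unitary Hecke characters of trivial conductor, and infinitely many finite-order ones---so neither ``finite order'' nor ``bounded conductor'' alone would force $S$ to be finite. (One could also shortcut the second paragraph by simply quoting the known finiteness of the group of self-twists of a cuspidal automorphic representation.)
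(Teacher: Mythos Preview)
Your proof is correct and follows the same overall arc as the paper: reduce to the equation $\pi'^*\otimes\chi\cong\widetilde{\pi^*}$ (forcing $n=n'$), extract the central-character constraint $\chi^n$ (equivalently $\eta^n$) fixed, pin down the conductor, and finish with the finiteness of a ray class group. The paper's execution is slightly leaner in two places. First, rather than routing through the self-twist group and a coset argument, it argues directly about the set of admissible $\chi$. Second, and more substantively, it avoids your appeal to a conductor formula for twists: from $\pi'\otimes\chi=\tilde\pi$ one sees immediately that $\chi$ can ramify only where $\pi$ or $\pi'$ does (at an unramified place, an unramified principal series twisted by a ramified character cannot be unramified), so the set of ramified places is bounded for free. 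The depth of ramification at those places is then controlled not by a Bushnell--Henniart type bound but simply by the finiteness of $F_v^\times/(F_v^\times)^n$, since $\chi^n$ is already determined by $\omega_{\pi'}\chi^n=\bar\omega_\pi$. Your route works just as well and has the virtue of making explicit that the obstruction is exactly the self-twist group (a fact one might want to quote elsewhere); the paper's route is marginally more self-contained because it needs no input about conductors of twists.
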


\begin{proof} Without loss of generality, let $(\pi,\pi')\in\mathfrak{F}_n^*\times\mathfrak{F}_{n'}^*$. Choose $\chi\in\mathfrak{F}_1^*$ such that $L(s,\pi\times(\pi'\otimes\chi))$ has a pole. Then $\pi'\otimes\chi=\tilde\pi$, and $n=n'$. Therefore, $\chi$ can only ramify where $\pi$ or $\pi'$ ramifies. Moreover, $\chi^n$ is uniquely determined by the equation
$\omega_{\pi'}\chi^n=\bar{\omega_\pi}$. Therefore, it suffices to show that if $S$ is a finite set of places containing all archimedean places and
\[
G=\prod_{v\in S}{F_v^\times}^n\prod_{v\not\in S}\cO_{F_v}^\times,
\]
then $F^\times G$ has finite index in $\A_F^\times$. Let $\A_F^1$ be the group of ideles of norm $1$, and let $G^1=\A_F^1\cap G$. We have that
\[
\A_F^\times/(F^\times G)\cong\A_F^1/(F^\times G^1)\cong(\A_F^1/F^\times)/(F^\times G^1/F^\times),
\]
where $G$ is open in $\A_F^\times$, $G^1$ is open in $\A_F^1$, and $F^\times G^1/F^\times$ is open in $\A_F^1/F^\times$.
As $\A_F^1/F^\times$ is compact, the right-hand side is finite, and so is the left-hand side.
\end{proof}

The next lemma prepares the scene for our main residue calculation.

\begin{lemma}
\label{lem:residue}
Let $f_0(s),\dotsc,f_m(s)$ be $m+1$ complex functions that are holomorphic in an open neighborhood of $s_0\in\CC$. If there exists $c\geq 0$ such that $|f_j(s_0)|=c$ for all $j\in\{1,\dotsc,m\}$, then
\[
\mathop{\mathrm{Res}}_{s=s_0}\frac{f_0(s)\dotsb f_m(s)}{(s-s_0)^m}
\]
equals $c$ times a $\CC$-linear combination of monomials of the derivative values $f_j^{(k)}(s_0)$ for $(j,k)\in\{0,\dotsc,m\}\times\{0,\dotsc,m-1\}$. The monomials in the linear combination, and the modulus of each coefficient in the linear combination depend at most on $m$.
\end{lemma}

\begin{proof}
Expanding $f_0(s),\dotsc,f_m(s)$ into Taylor series around $s_0$, we obtain
\[
\mathop{\mathrm{Res}}_{s=s_0}\frac{f_0(s)\dotsb f_m(s)}{(s-s_0)^m} = 
\sum_{\substack{k_0,\dotsc,k_m\geq 0\\ k_0+\dotsb+k_m=m-1}}\prod_{j=0}^m\frac{f_j^{(k_j)}(s_0)}{k_j!}.
\]
Since $k_0+\dotsb+k_m=m-1$, at least one of $k_1,\dotsc,k_{m}$ equals zero. Consequently, the $j$-product above has a factor of $f_j(s_0)$ for some $j\in\{1,\dots,m\}$, and the result follows.
\end{proof}

We apply \cref{lem:residue} to study the residues of an auxiliary $L$-function.  For $(\pi,\pi',\chi)\in\mathfrak{F}_n\times\mathfrak{F}_{n'}\times\mathfrak{F}_1$, we define
\[
\Pi=\pi\boxplus\pi\otimes\chi\boxplus\tilde\pi'\boxplus\tilde\pi'\otimes\bar{\chi}
\qquad\text{and}\qquad
D(s)=L(s,\Pi\times\tilde{\Pi}).
\]
We have the factorization
\begin{equation}
\label{eqn:D_def}
\begin{aligned}
D(s)=&~L(s,\pi\times\tilde\pi)^2 L(s,\pi'\times\tilde\pi')^2 L(s,\pi\times(\pi'\otimes\chi))^2 L(s,\tilde\pi\times(\tilde\pi'\otimes\bar{\chi}))^2\\
&\cdot L(s,\pi\times(\tilde\pi\otimes\chi)) L(s,\pi'\times(\tilde\pi'\otimes\chi)) L(s,\tilde\pi\times\tilde\pi') L(s,\pi\times(\pi'\otimes\chi^2))\\
&\cdot L(s,\pi\times(\tilde\pi\otimes\bar{\chi})) L(s,\pi'\times(\tilde\pi'\otimes\bar{\chi})) L(s,\pi\times\pi') L(s,\tilde\pi\times(\tilde\pi'\otimes\bar{\chi}^2)).
\end{aligned}
\end{equation}
It follows from \cref{lem:nonneg} that $D(s)$ has nonnegative Dirichlet coefficients.  Let us also introduce
\begin{equation}
\label{eqn:Qdef}
Q=(C(\pi)C(\pi'))^{2(n+n')}C(\chi)^{(n+n')^2}.
\end{equation}
The information we need about the residues of $D(s)$ is as follows.
\begin{lemma}
\label{lem:residue_bounds}
Let $(\pi,\pi',\chi)\in\mathfrak{F}_n\times\mathfrak{F}_{n'}\times\mathfrak{F}_1$, $x>1$, $\epsilon\in(0,1)$, and $\beta\in(1-\epsilon/2,1)$. Recall the notations \eqref{eqn:pidecomp}, \eqref{eqn:D_def}, \eqref{eqn:Qdef}, and let $\mathcal{S}$ be the set of poles of $D(s)$. Assume that the $L$-functions in \eqref{eqn:Lfns} are entire.  If $\pi\otimes\chi^*=\pi$ or $\pi'\otimes\chi^*=\pi'$, then assume also that $|t_\chi|>1$. We have that
\begin{equation}
\label{eqn:lemmabound}
\sum_{s_0\in\mathcal{S}}\mathop{\mathrm{Res}}_{s=s_0}D(s)x^{s-\beta}\Gamma(s-\beta)
\ll_{n,n',[F:\Q],\beta,\epsilon}|L(1,\pi\times(\pi'\otimes\chi))|(Qx)^{\epsilon}.	
\end{equation}
\end{lemma}

\begin{proof}
First, assume that $\pi\otimes\chi^*=\pi$ and $\pi'\otimes\chi^*=\pi'$. Then $|t_{\chi}|>1$ by hypothesis, and $\mathcal{S}=\{1,1-it_\chi,1+it_\chi\}$. For each choice of $s_0\in\mathcal{S}$, let $m$ be the order of the pole of $D(s)$ at $s=s_0$. Specifically, $m=4$ for $s_0=1$, and $m=2$ for $s_0=1\pm it_\chi$. Consider the decomposition
\begin{equation}
\label{eqn:decomposition}
(s-s_0)^m D(s)x^{s-\beta}\Gamma(s-\beta)=f_0(s)\dotsb f_m(s),
\end{equation}
where
\begin{itemize}
\item $f_1(s)=f_2(s)=L(s+it_\chi,\pi\times\pi')$ and $f_3(s)=f_4(s)=L(s-it_\chi,\tilde{\pi}\times\tilde{\pi}')$ for $s_0=1$;
\item $f_1(s)=L(s,\tilde\pi\times\tilde\pi')$ and $f_2(s)=L(s+2it_\chi,\pi\times\pi')$ for $s_0=1-it_\chi$;
\item $f_1(s)=L(s,\pi\times\pi')$ and $f_2(s)=L(s-2it_\chi,\tilde{\pi}\times\tilde{\pi}')$ for $s_0=1+it_\chi$.
\end{itemize}
These three cases correspond to the three lines in \eqref{eqn:D_def}, with $f_1(s),\dotsc,f_{m}(s)$ occurring as $m$ factors on the relevant line. Now we apply \cref{lem:residue} in conjunction with \cref{lem:Li1} and \eqref{eqn:BH}. The functions $f_0(s),\dotsc,f_m(s)$ defined above are holomorphic in the open disk $|s-s_0|<1-\beta$. Moreover,
\[
|f_j(s_0)|=|L(1,\pi\times(\pi'\otimes\chi))|,\qquad j\in\{1,\dots,m\}.
\]
We are finished upon noting that, at the point $s_0$, the $k$-th derivative of $s\mapsto x^{s-\beta}$ is bounded by $x^{\epsilon/2}(\log x)^k$, while
\[
\left|\Gamma^{(k)}(s_0-\beta)\right|\leq\int_0^\infty r^{-\beta}|\log r|^k e^{-r}\,dr.
\]

Second, assume that exactly one of $\pi\otimes\chi^*=\pi$ and $\pi'\otimes\chi^*=\pi'$ holds true. Then $|t_{\chi}|>1$ by hypothesis, and $\mathcal{S}=\{1,1-it_\chi,1+it_\chi\}$. For each choice of $s_0\in\mathcal{S}$, let $m$ be the order of the pole of $D(s)$ at $s=s_0$. Specifically, $m=4$ for $s_0=1$, and $m=1$ for $s_0=1\pm it_\chi$. Consider the decomposition \eqref{eqn:decomposition}, where
\begin{itemize}
\item $f_1(s)=f_2(s)=L(s+it_\chi,\pi\times\pi')$ and $f_3(s)=f_4(s)=L(s-it_\chi,\tilde{\pi}\times\tilde{\pi}')$ for $s_0=1$;
\item $f_1(s)=L(s,\tilde\pi\times\tilde\pi')$ for $s_0=1-it_\chi$;
\item $f_1(s)=L(s,\pi\times\pi')$ for $s_0=1+it_\chi$.
\end{itemize}
From here we proceed exactly as in the previous case.

Finally, assume that $\pi\otimes\chi^*\neq\pi$ and $\pi'\otimes\chi^*\neq\pi'$. Then $\mathcal{S}=\{1\}$ by hypothesis. The order of the pole of $D(s)$ at $s=1$ is 4. We consider the decomposition \eqref{eqn:decomposition}, where 
\[
f_1(s)=f_2(s)=L(s,\pi\times(\pi'\otimes\chi))\quad\text{and}\quad f_3(s)=f_4(s)=L(s,\tilde\pi\times(\tilde\pi'\otimes\bar{\chi})).
\]
These four factors occur on the first line of \eqref{eqn:D_def}, and we finish as in the other cases.
\end{proof}

We use \cref{lem:nonneg,lem:Li1,lem:residue_bounds} to prove \cref{prop:P1}.

\begin{proof}[Proof of \cref{prop:P1}]
Recall \eqref{eqn:pidecomp} and \eqref{eqn:Qdef}. If $\pi\otimes\chi^*=\pi$ or $\pi'\otimes\chi^*=\pi'$, then we may assume that $|t_\chi|>1$, because the left-hand side of \eqref{eqn:main_lower} equals $|L(1+it_\chi,\pi\times\pi')|$, a positive continuous function of $t_\chi\in\R$ by Shahidi's nonvanishing result \cite[Theorem~5.2]{Shahidi}, while the right-hand side is less than $1$. Subject to this constraint, let $D(s)$ be as in \eqref{eqn:D_def}, let $\mathcal{S}$ be the set of its poles, and let $x>1$ be a parameter to be chosen later. By the initial assumptions, $D(\beta)=0$. Hence $\mathcal{S}$ is also the set of poles of $D(s)x^{s-\beta}\Gamma(s-\beta)$ in the half-plane $\Re(s)>0$, and we shall use this below.

If $\lambda_{D}(\ka)$ is the $\ka$-th Dirichlet coefficient of $D(s)$, then $\lambda_{D}(\ka)\geq 0$ by \cref{lem:nonneg}. Since $\lambda_{D}(\cO_F)=1$, we have by the residue theorem
\begin{align*}
\frac{1}{e}\leq \sum_{\ka}\frac{\lambda_{D}(\ka)}{\N\ka^{\beta}}e^{-\frac{\N\ka}{x}}&=\frac{1}{2\pi i}\int_{1-i\infty}^{1+i\infty}D(s+\beta)x^s\Gamma(s)\,ds\\
&=\sum_{s_0\in\mathcal{S}}\mathop{\mathrm{Res}}_{s=s_0}D(s)x^{s-\beta}\Gamma(s-\beta)
+\frac{1}{2\pi i}\int_{1/2-i\infty}^{1/2+i\infty}D(s)x^{s-\beta}\Gamma(s-\beta)\,ds.
\end{align*}
We estimate the sum over $\mathcal{S}$ by \cref{lem:residue_bounds} (with $\epsilon$ replaced by $\epsilon/4$), and the last integral by
\cref{lem:Li1} (with $\epsilon$ replaced by $\epsilon/16$) combined with \eqref{eqn:BH} and Stirling's formula. We conclude that
\begin{equation}
\label{eqn:previousbound}
1\ll_{n,n',[F:\Q],\beta,\epsilon} \left(|L(1,\pi\times(\pi'\otimes\chi))|+Qx^{-1/2}\right)(Qx)^{\epsilon/4}.	
\end{equation}
At this point, we choose
\[x = \max\left(1,Q^{2}|L(1,\pi\times(\pi'\otimes\chi))|^{-2}\right).\]
If $x=1$, then \eqref{eqn:main_lower} is trivial. Otherwise, $x=Q^{2}|L(1,\pi\times(\pi'\otimes\chi))|^{-2}>1$, and \eqref{eqn:previousbound} yields
\eqref{eqn:main_lower} after solving for $|L(1,\pi\times(\pi'\otimes\chi))|$:
\[|L(1,\pi\times(\pi'\otimes\chi))|\gg_{n,n',[F:\Q],\beta,\epsilon} Q^{-3\epsilon/(4-2\epsilon)}>Q^{-\epsilon}.\qedhere\]
\end{proof}

\begin{remark}
The proof of \cref{prop:P1} relies crucially on the nonnegativity of the Dirichlet coefficients of $D(s)$. It is instructive to see that in the special case when $F=\Q$ and $\pi=\pi'=\mathbbm{1}$, the auxiliary $L$-function $D(s)$ becomes
\[
\zeta(s)^6 L(s,\chi)^4 L(s,\bar{\chi})^4 L(s,\chi^2) L(s,\bar{\chi}^2).
\]
The nonnegativity of the Dirichlet coefficients of (the logarithm of) this $L$-function is the basis of the standard zero-free region \eqref{eqn:standard_Dirichlet} for Dirichlet $L$-functions. Moreover, if $F=\Q$ and $\chi,\psi\in\mathfrak{F}_1$ are two quadratic characters, then for $\pi=\mathbbm{1}$ and $\pi'=\psi$ the auxiliary $L$-function $D(s)$ becomes
\[
\zeta(s)^4 L(s,\chi)^4 L(s,\psi)^4 L(s,\chi\psi)^4.
\]
The nonnegativity of the Dirichlet coefficients of this $L$-function (or in fact the same without the exponents $4$) is the basis of Siegel's lower bound on $L(1,\chi)$, leading to the zero-free interval \eqref{eqn:Siegel_Dirichlet}. These observations indicate that $D(s)$ is a very natural object.
\end{remark}

\section{Proof of \cref{thm:Siegel} for $\sigma=1$.}
\label{sec:main_case}

The goal of this section is to deduce \eqref{eqn:finalbound3} from \cref{prop:P1}. As in \cref{sec:Strategy}, we shall assume without loss of generality that $\epsilon\in(0,1/2)$ and use the notation \eqref{eqn:epsprime}.

\subsection{An initial reduction}
\label{subsec:inital_reduction}

We shall assume that $L(s,\pi\times(\pi'\otimes\chi))$ is holomorphic in the open disk $|s-1|<1$, for otherwise \eqref{eqn:finalbound3} is clear. Moreover, if $L(s,\pi\times(\pi'\otimes\chi))$ has no zero in the half-plane $\Re(s)>1-\epsilon'$, then standard methods produce a bound much stronger than \eqref{eqn:finalbound3}. Indeed, assume this zero-free region, and let $\sigma\in[1,2]$. Let us work with a parameter $x\geq 2$ (to be chosen later) and the Mellin transform pair
\[\phi(r)=\max(1-r,0),\qquad \hat\phi(w)=1/(w^2+w).\]
Proceeding as in the proof of \cite[Proposition~5.16]{IK}, but shifting the contour only to $\Re(w)=-\epsilon'/2$, we infer that
\[
-\frac{L'}{L}(\sigma,\pi\times(\pi'\otimes\chi))=
\sum_{\ka}\frac{\Lambda_{\pi\times(\pi'\otimes\chi)}(\ka)}{\N\ka^{\sigma}}\phi\left(\frac{\N\ka}{x}\right)\\
+O_{\pi,\pi',\epsilon}\left(x^{-\epsilon'/2}\log C(\chi)\right).
\]

Since $C(\chi)\geq 3$ and $0<\epsilon'<1/64$, the choice $x=(\log C(\chi))^{2/\epsilon'}$ satisfies $x\geq 3$ and $x^{-\epsilon'/2}\log C(\chi)=1$. Integrating the resulting approximation from $\sigma=1$ to $\sigma=2$, and applying the triangle inequality, we infer
\begin{align*}
|\log L(1,\pi\times(\pi'\otimes\chi))|&\leq
\sum_{2\leq \N\ka\leq x}\frac{|\Lambda_{\pi\times(\pi'\otimes\chi)}(\ka)|}{\N\ka\log\N\ka}+O_{\pi,\pi',\epsilon}(1)\\
&\ll_{\pi,\pi',\epsilon} \sum_{k=1}^{\lfloor \log x\rfloor}\frac{1}{ke^k}\sum_{e^{k-1}<\N\ka\leq e^k}|\Lambda_{\pi\times(\pi'\otimes\chi)}(\ka)|+1.
\end{align*}
By \cref{cor:BT}, we conclude the bound
\begin{align*}
|\log|L(1,\pi\times(\pi'\otimes\chi))||
&\leq |\log L(1,\pi\times(\pi'\otimes\chi))|\\
&\ll_{\pi,\pi',\epsilon}\log\log x\\
&\ll_{\pi,\pi',\epsilon}\log(1+\log\log C(\chi)).
\end{align*}
Consequently, there exists a constant $\Cl[abcon]{lower_ref}=\Cr{lower_ref}(\pi,\pi',\epsilon)>0$ such that
\[(1+\log\log C(\chi))^{-\Cr{lower_ref}}\leq |L(1,\pi\times(\pi'\otimes\chi))|\leq(1+\log\log C(\chi))^{\Cr{lower_ref}},\]
which is much stronger than \eqref{eqn:finalbound3}.

\subsection{The case of $\chi^*$ trivial} In this subsection, we prove \eqref{eqn:finalbound3} for all $\chi$ lying in the subgroup 
$G=\mathfrak{F}_1^{(1)}$. Let us write $\chi=|\cdot|^{it}\in G$ with $t\in\R$. If $L(s,\pi\times\pi')$ has a pole, then there exists $u\in\R$ such that $\pi'=\tilde\pi\otimes|\cdot|^{iu}$, hence \eqref{eqn:finalbound3} holds in the stronger form
\[
|L(1+it,\pi\times\pi')|\gg_{\pi,\pi'}1/\log(|t|+3)
\]
by appealing to the zero-free region in \cite[Theorem~2.1]{HumphriesThorner}. Therefore, we shall assume that $L(s,\pi\times\pi')$ is entire. Furthermore, as explained in \cref{sec:Strategy}, we can assume that \eqref{eqn:siegelhypothesis} holds for some $(\beta,\psi)\in(1-\epsilon',1)\times G$ depending only on $(\pi,\pi',\epsilon)$.

Consider the automorphic representations
\begin{equation}
\label{eqn:changeofvariable}
\pi''=\pi'\otimes\psi\in\mathfrak{F}_{n'}\qquad\text{and}\qquad\chi'=\bar\psi\chi\in G.
\end{equation}
It follows that
\[L(s,\pi\times(\pi''\otimes\chi'))=L(s,\pi\times(\pi'\otimes\chi))\qquad\text{and}\qquad C(\chi')\asymp_{\pi,\pi',\epsilon}C(\chi),\]
and \eqref{eqn:finalbound3} is equivalent to
\begin{equation}
\label{eqn:finalbound4}
|L(1,\pi\times(\pi''\otimes\chi'))|\gg_{\pi,\pi'',\epsilon}C(\chi')^{-\epsilon}.
\end{equation}
But \eqref{eqn:finalbound4} follows readily from \cref{prop:P1}, since $L(\beta,\pi\times\pi'')=0$ and the following $L$-functions are entire:
\begin{equation}
\label{eqn:Lfns2}
L(s,\pi\times\pi''),\qquad L(s,\pi\times(\pi''\otimes\chi')),\qquad L(s,\pi\times(\pi''\otimes\chi'^2)).
\end{equation}
Indeed, these $L$-functions are shifts of $L(s,\pi\times\pi')$, which is entire by assumption.

We have shown that \eqref{eqn:finalbound3} holds for all $\chi\in\mathfrak{F}_1^{(1)}$. Consequently, \eqref{eqn:finalbound3} also holds for all $\chi$ in any \emph{fixed coset} of $\mathfrak{F}_1^{(1)}$ within $\mathfrak{F}_1$. We shall use this principle in the next subsection.

\subsection{The case of $\chi^*$ quadratic} In this subsection, we prove \eqref{eqn:finalbound3} for all $\chi$ lying in the subgroup 
$G=\mathfrak{F}_1^{(2)}$. If $L(s,\pi\times(\pi'\otimes\chi))$ has a pole, then by \cref{lem:twists}, $\chi$ lies in finitely many cosets of $\mathfrak{F}_1^{(1)}$ (depending only on $(\pi,\pi')$), hence \eqref{eqn:finalbound3} holds by the concluding remark of the previous subsection. Therefore, we shall assume that $L(s,\pi\times(\pi'\otimes\chi))$ is entire.

As before, we also assume that \eqref{eqn:siegelhypothesis} holds for some $(\beta,\psi)\in(1-\epsilon',1)\times G$ depending only on $(\pi,\pi',\epsilon)$, and we need to prove \eqref{eqn:finalbound4} with the notation \eqref{eqn:changeofvariable}. But \eqref{eqn:finalbound4} follows readily from \cref{prop:P1}, upon noting that $L(\beta,\pi\times\pi'')=0$ and the $L$-functions in \eqref{eqn:Lfns2} are entire. Indeed, the first two $L$-functions in \eqref{eqn:Lfns2} are entire by assumption, while the third $L$-function is a shift of the first one due to $\chi\in G$.

We have shown that \eqref{eqn:finalbound3} holds for all $\chi\in\mathfrak{F}_1^{(2)}$. Consequently, \eqref{eqn:finalbound3} also holds for all $\chi$ in any \emph{fixed coset} of $\mathfrak{F}_1^{(2)}$ within $\mathfrak{F}_1$. We shall use this principle in the next subsection.

\subsection{The general case} In this subsection, we prove \eqref{eqn:finalbound3} in general. As before, we assume that $L(s,\pi\times(\pi'\otimes\chi))$ is entire, and \eqref{eqn:siegelhypothesis} holds for some $(\beta,\psi)\in(1-\epsilon',1)\times \mathfrak{F}_1$ depending only on $(\pi,\pi',\epsilon)$. We need to prove \eqref{eqn:finalbound4} with the notation \eqref{eqn:changeofvariable}. 

If $L(s,\pi\times(\pi''\otimes\chi'^2))$ has a pole, then by \cref{lem:twists}, $\chi'$ lies in finitely many cosets of $\mathfrak{F}_1^{(2)}$ depending only on $(\pi,\pi'')$, hence \eqref{eqn:finalbound4} holds by the concluding remark of the previous subsection. Therefore, we shall assume that $L(s,\pi\times(\pi''\otimes\chi'^2))$ is entire. But then \eqref{eqn:finalbound4} follows readily from \cref{prop:P1}, because $L(\beta,\pi\times\pi'')=0$ and the $L$-functions in \eqref{eqn:Lfns2} are entire by assumption.

\section{Finishing the proof of \cref{thm:Siegel}}
\label{sec:finish}

Now that we have proved \eqref{eqn:finalbound3} for all $(\pi,\pi',\chi)\in\mathfrak{F}_n\times\mathfrak{F}_{n'}\times\mathfrak{F}_1$ and $\epsilon>0$, we can finish the proof of \cref{thm:Siegel}. As before, by continuity and nonvanishing arguments, we can assume that $L(s,\pi\times(\pi'\otimes\chi))$ is holomorphic in the open disk $|s-1|<1$.

\subsection{The case of $\sigma<1$}\label{subsub}
We begin with the straightforward bound
\[
|L(1,\pi\times(\pi'\otimes\chi))-L(\sigma,\pi\times(\pi'\otimes\chi))|\leq(1-\sigma)\sup_{\kappa\in[\sigma,1]}|L'(\kappa,\pi\times(\pi'\otimes\chi))|.
\]
For $|L(1,\pi\times(\pi'\otimes\chi))|$, we have the lower bound \eqref{eqn:finalbound3}. On the other hand, by \cref{lem:Li1} and \eqref{eqn:BH}, there exists a constant $\Cl[abcon]{ZFRN2}=\Cr{ZFRN2}(\pi,\pi',\epsilon)>0$ such that
\[
|L'(\kappa,\pi\times(\pi'\times\chi))|\leq\Cr{ZFRN2}C(\chi)^{\epsilon/2},\qquad\kappa\geq 1-\frac{\epsilon}{2nn'}.
\]
Define
\[
\Cl[abcon]{ZFRN3}=\Cr{ZFRN3}(\pi,\pi',\epsilon)=\min\left(\frac{\Cr{ZFR3}}{1+\Cr{ZFRN2}},\frac{\epsilon}{2nn'}\right).
\]
If $1-\Cr{ZFRN3}C(\chi)^{-\epsilon}<\sigma<1$, then
\begin{align*}
|L(\sigma,\pi\times(\pi'\otimes\chi))|
&\geq |L(1,\pi\times(\pi'\otimes\chi))|-|L(1,\pi\times(\pi'\otimes\chi))-L(\sigma,\pi\times(\pi'\otimes\chi))|\\
&\geq |L(1,\pi\times(\pi'\otimes\chi))|-\Cr{ZFRN3}C(\chi)^{-\epsilon}\cdot\Cr{ZFRN2}C(\chi)^{\epsilon/2}\\
&\geq (\Cr{ZFR3}- \Cr{ZFRN2}\Cr{ZFRN3})C(\chi)^{-\epsilon/2}\\
&\geq \Cr{ZFRN3} C(\chi)^{-\epsilon/2}.
\end{align*}

\subsection{The case of $\sigma>1$}

It follows from \eqref{eqn:GRC1} and \eqref{eqn:GRC2} that
\begin{equation}
\label{eqn:edge2}
L(\sigma,\pi\times(\pi'\otimes\chi))\asymp_{n,n'}1,\qquad\sigma\geq 3.
\end{equation}
To prove \eqref{eqn:finalbound2} in the strip $1<\sigma<3$, we interpolate between \eqref{eqn:finalbound3} and \eqref{eqn:edge2} by applying Phragm{\'e}n--Lindel{\"o}f principle to $1/L(s,\pi\times\pi')$.

\section{Proof of \cref{thm:PNTAP}}
\label{sec:PNTAP}

Let $x\geq 2$, $A>0$, and $y=x(\log x)^{-A}$. Consider the Mellin transform pair
\begin{equation}
\label{eqn:hatphi}
\begin{aligned}
\phi(r)&=\mathbf{1}_{(0,x]}(r)+\mathbf{1}_{(x,x+y]}(r)\frac{x+y-r}{y},\qquad r>0;\\
\hat\phi(s)&=\int_{0}^{\infty}\phi(r)r^{s-1}\,dr=\frac{(x+y)^{s+1}-x^{s+1}}{y(s^2+s)},\qquad\Re(s)>0.
\end{aligned}
\end{equation}
Clearly,
\begin{align*}
\sum_{\substack{\N\ka\leq x \\ \ka\in\mathcal{C}}}\Lambda_{\pi\times\pi'}(\ka) = \sum_{\substack{ \ka\in\mathcal{C}}}\Lambda_{\pi\times\pi'}(\ka)\phi(\N\ka)+O\left(\sum_{x<\N\ka\leq x+y}|\Lambda_{\pi\times\pi'}(\ka)|\right).
\end{align*}
By \cref{cor:BT}, the error term above is $O_{\pi,\pi'}(y)$.

Let $\widehat{\mathrm{Cl}(\kq)}$ be the group of characters of $\mathrm{Cl}(\kq)$. By character orthogonality, we have that
\[
\sum_{\substack{\ka\in\mathcal{C}}}\Lambda_{\pi\times\pi'}(\ka)\phi(\N\ka)=\frac{1}{|\mathrm{Cl}(\kq)|}\sum_{\psi\in\widehat{\mathrm{Cl}(\kq)}}\bar{\psi}(\mathcal{C})\mathcal\sum_{\ka}\Lambda_{\pi\times\pi'}(\ka)\psi(\ka)\phi(\N\ka).
\]
Using \eqref{eqn:GRC1} and \eqref{eqn:GRC2}, we find that if $\chi\in\mathscr{P}(\kq)$ is the primitive ray class character that induces $\psi$, then
\begin{align*}
&\sum_{\ka}\Lambda_{\pi\times\pi'}(\ka)\psi(\ka)\phi(\N\ka)-\sum_{\ka}\Lambda_{\pi\times(\pi'\otimes\chi)}(\ka)\phi(\N\ka)\\
&\ll nn'\sum_{\kp\mid\kq\kq_{\pi}\kq_{\pi'}}\sum_{\ell\leq \frac{\log (2x)}{\log\N\kp}}\N\kp^{\ell(\theta_n+\theta_{n'})}\log\N\kp\\
&\ll_{\pi,\pi'} x^{1-\frac{1}{n^2+1}-\frac{1}{(n')^2+1}}(\log x)(\log\N\kq).
\end{align*}
In light of the hypothesis $\N\kq\leq (\log x)^A$ and our choice of $y$, it follows that
\[
\sum_{\substack{\N\ka\leq x \\ \ka\in\mathcal{C}}}\Lambda_{\pi\times\pi'}(\ka)=\frac{1}{|\mathrm{Cl}(\kq)|}\sum_{\chi\in\mathscr{P}(\kq)}\overline{\chi}(\mathcal{C})\sum_{\ka}\Lambda_{\pi\times(\pi'\otimes\chi)}(\ka)\phi(\N\ka)+O_{\pi,\pi',A}(y).
\]

Recall the definition of $\mathcal{E}_{\pi\times\pi'}(x;\kq,\mathcal{C})$ from \eqref{eqn:Edef}. Applying Mellin inversion, we obtain
\begin{multline*}
\mathcal{E}_{\pi\times\pi'}(x;\kq,\mathcal{C})=\frac{1}{|\mathrm{Cl}(\kq)|}\sum_{\chi\in\mathscr{P}(\kq)}\frac{\overline{\chi}(\mathcal{C})}{2\pi i}\int_{3-i\infty}^{3+i\infty}-\frac{L'}{L}(s,\pi\times(\pi'\otimes\chi))\,\hat\phi(s)\,ds\\
-\frac{1}{|\mathrm{Cl}(\kq)|}\sum_{\chi\in\mathscr{P}(\kq)}\overline{\chi}(\mathcal{C})\mathcal{M}_{\pi\times(\pi'\otimes\chi)}(x)+O_{\pi,\pi',A}(y).
\end{multline*}
Define $\epsilon=1/(3A+2)$. By \eqref{eqn:ZFR}, \cref{lem:Li1}, \eqref{eqn:BH}, and the bound $C(\chi)\ll_{[F:\Q]}\N\kq$, there exists an ineffective constant $\Cl[abcon]{C7777}=\Cr{C7777}(\pi,\pi',\epsilon)>0$ such that on the piecewise smooth parametric curve
\[
\mathscr{C}(t)=1-\Cr{C7777}(\N\kq(|t|+1))^{-\epsilon}+it,\qquad t\in\R,
\]
there holds
\begin{equation}
\label{eqn:ZFR_twisted2}
\frac{L'}{L}(\mathscr{C}(t),\pi\times(\pi'\otimes\chi))\ll_{\pi,\pi',\epsilon}(\N\kq(|t|+1))^{2\epsilon}.
\end{equation}
Note that $\mathscr{C}'(t)\ll_{\pi,\pi',\epsilon} 1$ for $t\neq 0$. We deform the line of integration to $\mathscr{C}$. 

If there exists $u\in\R$ such that $\pi'\otimes\chi=\tilde{\pi}\otimes|\cdot|^{iu}$, then the integrand has a pole at $s=1-iu$ with residue
\[
\hat\phi(1-iu)=\int_0^x r^{-iu}\,dr + \int_x^{x+y}\phi(r)r^{-iu}\,dr=\frac{x^{1-iu}}{1-iu}+O(y)=\mathcal{M}_{\pi\times(\pi'\otimes\chi)}(x)+O(y).
\]
If no such $u$ exists, then $L(s,\pi\times(\pi'\otimes\chi))$ is entire, and $\mathcal{M}_{\pi\times(\pi'\otimes\chi)}(x)=0$. Therefore, by the residue theorem,
\begin{align*}
\mathcal{E}_{\pi\times\pi'}(x;\kq,\mathcal{C})&=\frac{1}{|\mathrm{Cl}(\kq)|}\sum_{\chi\in\mathscr{P}(\kq)}\frac{\overline{\chi}(\mathcal{C})}{2\pi i}\int_{\mathscr{C}}-\frac{L'}{L}(s,\pi\times(\pi'\otimes\chi))\,\hat\phi(s)\,ds+O_{\pi,\pi',A}(y)\\
&\ll_{\pi,\pi',A}\max_{\chi\in\mathscr{P}(\kq)}\left|\int_{\mathscr{C}}-\frac{L'}{L}(s,\pi\times(\pi'\otimes\chi))\,\hat\phi(s)\,ds\right|+y.
\end{align*}
Therefore, we bound the integrand via \eqref{eqn:hatphi} and \eqref{eqn:ZFR_twisted2}:
\[
\mathcal{E}_{\pi\times\pi'}(x;\kq,\mathcal{C})
\ll_{\pi,\pi',A}\frac{x^2}{y}\N\kq^{2\epsilon}\int_{-\infty}^{\infty}x^{-\Cr{C7777}(\N\kq(|t|+1))^{-\epsilon}}(|t|+1)^{2\epsilon-2}\,dt+y.
\]
Define $r=(\N\kq(|t|+1))^{-\epsilon}\log x$. We express the last integral in terms of the new variable $r$ and estimate it in a straightforward fashion, using that $\epsilon\in(0,1/2$):
\[
\mathcal{E}_{\pi\times\pi'}(x;\kq,\mathcal{C})
\ll_{\pi,\pi',A}\frac{x^2}{y}(\log x)^{2-1/\epsilon}\N\kq \int_0^{\N\kq^{-\epsilon}\log x}e^{-\Cr{C7777} r}r^{-3+1/\epsilon}\,dr+y.
\]
Our choices of $y$ and $\epsilon$ and our range of $\N\kq$ ensure that $\mathcal{E}_{\pi\times\pi'}(x;\kq,\mathcal{C})
\ll_{\pi,\pi',A} y$, as desired.

\section{Proof of \cref{thm:sym}}
\label{sec:sym}

We begin with a useful corollary of \cref{thm:Siegel} and \cref{lem:Li1}.

\begin{corollary}
\label{cor:tightness}
Let $(\pi,\pi',\chi)\in\mathfrak{F}_{n}\times\mathfrak{F}_{n'}\times\mathfrak{F}_1$. Let $\mathcal{L}(s,\pi\times\pi')$
be as in \eqref{cL2}. For all $\epsilon>0$, there exists an ineffective constant $\Cl[abcon]{Li_cor_1}=\Cr{Li_cor_1}(\pi,\pi',\epsilon)>0$ and an effective constant $\Cl[abcon]{Li_cor_2}=\Cr{Li_cor_2}(\pi,\pi',\epsilon)>0$ such that if $\sigma\geq 1-\Cr{Li_cor_1}C(\chi)^{-\epsilon}$, then
\[
\Cr{Li_cor_1}C(\chi)^{-\epsilon}\leq |\mathcal{L}(\sigma,\pi\times(\pi'\otimes\chi))|\leq \Cr{Li_cor_2}C(\chi)^{\epsilon}.
\]
\end{corollary}
\begin{proof} Assume that $\Cr{Li_cor_1}\leq\min(\Cr{ZFR2}/3,\epsilon/(nn'))$, where $\Cr{ZFR2}$ is the constant from \cref{thm:Siegel}. The upper bound follows (with suitable $\Cr{Li_cor_2}$) from \cref{lem:Li1} and \eqref{eqn:BH}, hence we focus on the lower bound. If $L(s,\pi\times(\pi'\otimes\chi))$ is entire or $|t_\pi+t_{\pi'}+t_\chi|>1$, then the lower bound follows from \cref{thm:Siegel}.  The case $\sigma>3$ is covered by \eqref{eqn:edge2}. Finally, assume that $L(s,\pi\times(\pi'\otimes\chi))$ has a pole, $|t_\pi+t_{\pi'}+t_\chi|\leq 1$, and $\sigma\leq 3$. Then $\pi'^*\otimes\chi^*=\tilde\pi^*$, and the (finite) values
\[\mathcal{L}(\sigma,\pi\times(\pi'\otimes\chi))=\mathcal{L}(\sigma+i(t_\pi+t_{\pi'}+t_\chi),\pi^*\times\tilde\pi^*)\]
are nonzero by \cref{thm:Siegel}. As $\sigma+i(t_\pi+t_{\pi'}+t_\chi)$ varies in a compact set, the corresponding absolute values have a positive minimum by continuity, and the result follows (with suitable $\Cr{Li_cor_1}$).
\end{proof}

Let $(\pi,\chi)\in\mathfrak{F}_2\times\mathfrak{F}_1$. If $n\in\{1,2,3,4\}$, then $\Sym^n(\pi)\otimes\chi\in\mathfrak{A}_{n+1}$ by \cite{GJ,Kim,KimShahidi}. It follows that there exists a least integer $r_{n,\pi,\chi}\geq 0$ such that $\mathcal{L}(s,\pi,\Sym^n\otimes\chi)$ is entire. Moreover, $L(s,\Sym^n(\pi)\otimes\chi)$ factors as a product of $\chi$-twisted $L$-functions of cuspidal automorphic representations with unitary central characters. Applying \cref{cor:tightness} to each factor, we obtain the conclusions of \cref{thm:sym}.

Now, assume that $n\in\{5,6,7,8\}$. Let $\ell,m\in\{1,2,3,4\}$, and define $\Sym^1(\pi)=\pi$ and $\Sym^0(\pi)=\mathbbm{1}$. The Clebsch--Gordan identities imply that if $\Re(s)$ is suitably large, then
\[
L(s,\pi,\Sym^{\ell}(\pi)\times(\Sym^m(\pi)\otimes\chi))=\prod_{k=0}^{\min(\ell,m)}L(s,\pi,\Sym^{\ell+m-2k}\otimes\chi\omega_{\pi}^k).
\]
It follows that
\[L(s,\pi,\Sym^n\otimes\chi)=
\frac{L(s,\Sym^4(\pi)\times(\Sym^{n-4}(\pi)\otimes\chi))}{L(s,\Sym^3(\pi)\times(\Sym^{n-5}(\pi)\otimes\chi\omega_{\pi}))}.\]
The symmetric powers of $\pi$ on the right-hand side are isobaric automorphic representations. Hence, there exist nonempty sets $A(\pi,n)$ and $B(\pi,n)$ of ordered pairs of cuspidal automorphic representations with unitary central characters such that
\[
L(s,\pi,\Sym^n\otimes\chi)=
\frac{\prod_{(\rho,\rho')\in A(\pi,n)} L(s,\rho\times(\rho'\otimes\chi))}
{\prod_{(\rho,\rho')\in B(\pi,n)} L(s,\rho\times(\rho'\otimes\chi))}.
\]
In particular, the left-hand side is meromorphic on $\mathbb{C}$, and by the work of Kim--Shahidi~\cite{KimShahidi2}, its only potential pole on the line $\Re(s)=1$ is at $s=1-i(nt_\pi+t_\chi)$. Recalling the notation in \eqref{cL1} and \eqref{cL2}, we infer that
\[\cL(s,\pi,\Sym^n\otimes\chi)=
\frac{\prod_{(\rho,\rho')\in A(\pi,n)} \cL(s,\rho\times(\rho'\otimes\chi))}
{\prod_{(\rho,\rho')\in B(\pi,n)} \cL(s,\rho\times(\rho'\otimes\chi))}.\]
Applying \cref{cor:tightness} to each pair $(\rho,\rho')\in A(\pi,n)\cup B(\pi,n)$, we conclude \cref{thm:sym}.

\bibliographystyle{abbrv}
\bibliography{HarcosThornerZFR}

\begin{thebibliography}{10}

\bibitem{Banks}
W.~D. Banks.
\newblock Twisted symmetric-square {$L$}-functions and the nonexistence of
  {S}iegel zeros on {${\rm GL}(3)$}.
\newblock {\em Duke Math. J.}, 87(2):343--353, 1997.

\bibitem{Brumley}
F.~Brumley.
\newblock Effective multiplicity one on $\mathrm{GL}_{N}$ and narrow zero-free
  regions for {R}ankin--{S}elberg {$L$}-functions.
\newblock {\em Amer. J. Math.}, 128(6):1455--1474, 2006.

\bibitem{GJ}
S.~Gelbart and H.~Jacquet.
\newblock A relation between automorphic representations of {${\rm GL}(2)$} and
  {${\rm GL}(3)$}.
\newblock {\em Ann. Sci. \'{E}cole Norm. Sup. (4)}, 11(4):471--542, 1978.

\bibitem{GelbartLapid}
S.~S. Gelbart and E.~M. Lapid.
\newblock Lower bounds for {$L$}-functions at the edge of the critical strip.
\newblock {\em Amer. J. Math.}, 128(3):619--638, 2006.

\bibitem{GoldfeldLi}
D.~Goldfeld and X.~Li.
\newblock A standard zero free region for {R}ankin--{S}elberg {$L$}-functions.
\newblock {\em Int. Math. Res. Not. IMRN}, (22):7067--7136, 2018.

\bibitem{Harcos}
G.~Harcos.
\newblock Uniform approximate functional equation for principal
  {$L$}-functions.
\newblock {\em Int. Math. Res. Not.}, (18):923--932, 2002.

\bibitem{HoffsteinLockhart}
J.~Hoffstein and P.~Lockhart.
\newblock Coefficients of {M}aass forms and the {S}iegel zero.
\newblock {\em Ann. of Math. (2)}, 140(1):161--181, 1994.
\newblock With an appendix by D. Goldfeld, H. and D. Lieman.

\bibitem{HoffsteinRamakrishnan}
J.~Hoffstein and D.~Ramakrishnan.
\newblock Siegel zeros and cusp forms.
\newblock {\em Int. Math. Res. Not.}, (6):279--308, 1995.

\bibitem{Humphries}
P.~Humphries and F.~Brumley.
\newblock Standard zero-free regions for {R}ankin--{S}elberg {$L$}-functions
  via sieve theory.
\newblock {\em Math. Z.}, 292(3-4):1105--1122, 2019.

\bibitem{HumphriesThorner}
P.~Humphries and J.~Thorner.
\newblock Towards a $\mathrm{GL}_n$ variant of the {H}oheisel phenomenon.
\newblock {\em Trans. Amer. Math. Soc.}, 375(3):1801--1824, 2022.

\bibitem{HumphriesThorner2}
P.~Humphries and J.~Thorner.
\newblock Zeros of {R}ankin-{S}elberg {$L$}-functions in families.
\newblock {\em Compos. Math.}, 160(5):1041--1072, 2024.

\bibitem{IM}
Y.~Ichihara and K.~Matsumoto.
\newblock On the {S}iegel--{T}atuzawa theorem for a class of {$L$}-functions.
\newblock {\em Kyushu J. Math.}, 62(1):201--215, 2008.

\bibitem{IK}
H.~Iwaniec and E.~Kowalski.
\newblock {\em Analytic number theory}, volume~53 of {\em American Mathematical
  Society Colloquium Publications}.
\newblock American Mathematical Society, Providence, RI, 2004.

\bibitem{JPSS}
H.~Jacquet, I.~I. Piatetskii-Shapiro, and J.~A. Shalika.
\newblock Rankin--{S}elberg convolutions.
\newblock {\em Amer. J. Math.}, 105(2):367--464, 1983.

\bibitem{JacquetShalika}
H.~Jacquet and J.~A. Shalika.
\newblock A non-vanishing theorem for zeta functions of {${\rm GL}_{n}$}.
\newblock {\em Invent. Math.}, 38(1):1--16, 1976/77.

\bibitem{JS1}
H.~Jacquet and J.~A. Shalika.
\newblock On {E}uler products and the classification of automorphic
  representations. {I}.
\newblock {\em Amer. J. Math.}, 103(3):499--558, 1981.

\bibitem{JS2}
H.~Jacquet and J.~A. Shalika.
\newblock On {E}uler products and the classification of automorphic
  representations. {II}.
\newblock {\em Amer. J. Math.}, 103(4):777--815, 1981.

\bibitem{JLTW}
Y.~Jiang, G.~L\"{u}, J.~Thorner, and Z.~Wang.
\newblock A {B}ombieri--{V}inogradov theorem for higher-rank groups.
\newblock {\em Int. Math. Res. Not. IMRN}, (1):482--535, 2023.

\bibitem{JLW}
Y.~Jiang, G.~L\"{u}, and Z.~Wang.
\newblock Exponential sums with multiplicative coefficients without the
  {R}amanujan conjecture.
\newblock {\em Math. Ann.}, 379(1-2):589--632, 2021.

\bibitem{Kim}
H.~H. Kim.
\newblock Functoriality for the exterior square of {${\rm GL}_4$} and the
  symmetric fourth of {${\rm GL}_2$}.
\newblock {\em J. Amer. Math. Soc.}, 16(1):139--183, 2003.
\newblock With appendix 1 by D. Ramakrishnan and appendix 2 by H. H. Kim and P.
  Sarnak.

\bibitem{KimShahidi2}
H.~H. Kim and F.~Shahidi.
\newblock Cuspidality of symmetric powers with applications.
\newblock {\em Duke Math. J.}, 112(1):177--197, 2002.

\bibitem{KimShahidi}
H.~H. Kim and F.~Shahidi.
\newblock Functorial products for $\mathrm{GL}_2\times\mathrm{GL}_3$ and the
  symmetric cube for $\mathrm{GL}_2$.
\newblock {\em Ann. of Math. (2)}, 155(3):837--893, 2002.
\newblock With an appendix by C. J. Bushnell and G. Henniart.

\bibitem{Lapid}
E.~Lapid.
\newblock On the {H}arish-{C}handra {S}chwartz space of {$G(F)\backslash
  G(\mathbb{A})$}.
\newblock In {\em Automorphic representations and {$L$}-functions}, volume~22
  of {\em Tata Inst. Fundam. Res. Stud. Math.}, pages 335--377. Tata Inst.
  Fund. Res., Mumbai, 2013.
\newblock With an appendix by F. Brumley.

\bibitem{Li}
X.~Li.
\newblock Upper bounds on {$L$}-functions at the edge of the critical strip.
\newblock {\em Int. Math. Res. Not. IMRN}, (4):727--755, 2010.

\bibitem{Luo}
W.~Luo.
\newblock Non-existence of {S}iegel zeros for cuspidal functorial products on
  {$GL(2) \times GL(3)$}.
\newblock {\em Proc. Amer. Math. Soc.}, 151(5):1915--1919, 2023.

\bibitem{LRS}
W.~Luo, Z.~Rudnick, and P.~Sarnak.
\newblock On the generalized {R}amanujan conjecture for {${\rm GL}(n)$}.
\newblock In {\em Automorphic forms, automorphic representations, and
  arithmetic ({F}ort {W}orth, {TX}, 1996)}, volume~66 of {\em Proc. Sympos.
  Pure Math.}, pages 301--310. Amer. Math. Soc., Providence, RI, 1999.

\bibitem{Molteni}
G.~Molteni.
\newblock Upper and lower bounds at {$s=1$} for certain {D}irichlet series with
  {E}uler product.
\newblock {\em Duke Math. J.}, 111(1):133--158, 2002.

\bibitem{Moreno}
C.~J. Moreno.
\newblock Analytic proof of the strong multiplicity one theorem.
\newblock {\em Amer. J. Math.}, 107(1):163--206, 1985.

\bibitem{MullerSpeh}
W.~M\"{u}ller and B.~Speh.
\newblock Absolute convergence of the spectral side of the {A}rthur trace
  formula for $\mathrm{GL}_n$.
\newblock {\em Geom. Funct. Anal.}, 14(1):58--93, 2004.
\newblock With an appendix by E. M. Lapid.

\bibitem{NewtonThorne}
J.~Newton and J.~A. Thorne.
\newblock Symmetric power functoriality for holomorphic modular forms.
\newblock {\em Publ. Math. Inst. Hautes \'{E}tudes Sci.}, 134:1--116, 2021.

\bibitem{NewtonThorne2}
J.~Newton and J.~A. Thorne.
\newblock Symmetric power functoriality for holomorphic modular forms, {II}.
\newblock {\em Publ. Math. Inst. Hautes \'{E}tudes Sci.}, 134:117--152, 2021.

\bibitem{NewtonThorne3}
J.~{Newton} and J.~A. {Thorne}.
\newblock Symmetric power functoriality for {H}ilbert modular forms.
\newblock {\em arXiv e-prints}, Dec. 2022.
\newblock arXiv:2212.03595.

\bibitem{RamakrishnanWang}
D.~Ramakrishnan and S.~Wang.
\newblock On the exceptional zeros of {R}ankin--{S}elberg {$L$}-functions.
\newblock {\em Compositio Math.}, 135(2):211--244, 2003.

\bibitem{Sarnak}
P.~Sarnak.
\newblock Nonvanishing of {$L$}-functions on $\re(s)=1$.
\newblock In {\em Contributions to automorphic forms, geometry, and number
  theory}, pages 719--732. Johns Hopkins Univ. Press, Baltimore, MD, 2004.

\bibitem{Shahidi}
F.~Shahidi.
\newblock On certain {$L$}-functions.
\newblock {\em Amer. J. Math.}, 103(2):297--355, 1981.

\bibitem{Siegel}
C.~L. Siegel.
\newblock {\"U}ber die {C}lassenzahl quadratischer {Z}ahlk{\"o}rper.
\newblock {\em Acta. Arith.}, 1(1):83--86, 1935.

\bibitem{SoundararajanThorner}
K.~Soundararajan and J.~Thorner.
\newblock Weak subconvexity without a {R}amanujan hypothesis.
\newblock {\em Duke Math. J.}, 168(7):1231--1268, 2019.
\newblock With an appendix by F. Brumley.

\bibitem{Tatuzawa}
T.~Tatuzawa.
\newblock On a theorem of {S}iegel.
\newblock {\em Jpn. J. Math.}, 21:163--178 (1952), 1951.

\bibitem{Walfisz}
A.~Walfisz.
\newblock Zur additiven {Z}ahlentheorie. {II}.
\newblock {\em Math. Z.}, 40(1):592--607, 1936.

\bibitem{Zhang}
Q.~Zhang.
\newblock Lower bounds for {R}ankin-{S}elberg {$L$}-functions on the edge of
  the critical strip.
\newblock {\em Acta Arith.}, 211(2):161--171, 2023.

\end{thebibliography}

\end{document}